\theoremstyle{plain}
\newtheorem{theorem}{Theorem}
\newtheorem{lemma}[theorem]{Lemma}
\newtheorem{fact}[theorem]{Fact}
\newtheorem{corollary}[theorem]{Corollary}
\newtheorem*{theorem*}{Theorem}
\newtheorem*{lemma*}{Lemma}
\theoremstyle{definition}
\newtheorem{definition}[theorem]{Definition}
\newtheorem*{definition*}{Definition}
\newtheorem*{example*}{Example}
\theoremstyle{remark}
\newtheorem{remark}[theorem]{Remark}
\newtheorem*{remark*}{Remark}
\newcommand{\comment}[1]{}
\newcommand{\extracomputation}[1]{}
\newcommand{\FamilyFont}[1]{\textsc{#1}}
\newcommand{\Conn}{\ensuremath{\FamilyFont{Conn}}}
\newcommand{\Ham}{\ensuremath{\FamilyFont{Ham}}}
\newcommand{\Chr}[1]{\ensuremath{\FamilyFont{Ch}_{#1}}}
\newcommand{\Pln}{\ensuremath{\FamilyFont{Pln}}}
\newcommand{\DistributionFont}[1]{\mathrm{#1}}
\newcommand{\Bin}{\DistributionFont{Bin}}
\newcommand{\Pois}{\DistributionFont{Pois}}
\def\imod#1{\allowbreak\mkern5.5mu({\operator@font mod}\,\,#1)}
\newcommand{\predfont}[1]{\ensuremath{\mathrm{#1}}}
\newcommand{\langfont}[1]{\ensuremath{\mathcal{#1}}}
\newcommand{\Logic}{\ensuremath{\langfont{L}}}
\newcommand{\FO}{\ensuremath{\langfont{FO}}}
\newcommand{\SO}{\ensuremath{\langfont{SO}}}
\newcommand{\MSO}{\ensuremath{\langfont{MSO}}}
\newcommand{\MESO}{\ensuremath{\langfont{MESO}}}
\newcommand{\E}{\mathbb{E}} 
\newcommand{\VAR}{\mathrm{Var}} 
\newcommand{\COV}{\mathrm{Cov}} 
\newcommand{\logstar}{\log^*}
\newcommand{\codeg}{\ensuremath{\mathrm{codeg}}}
\newcommand{\defeq}{\ensuremath{:=}}
\author{Simi Haber
\and Saharon Shelah
}
\title{Random graphs and Lindstr\"{o}m quantifiers for natural graph properties}
\date{}
\begin{document}
\maketitle

\begin{abstract}
We study zero-one laws for random graphs. We focus on the following
question that was asked by many: Given a graph property $P$, is
there a language of graphs able to express $P$ while obeying the
zero-one law? Our results show that on the one hand there is a
(regular) language able to express connectivity and $k$-colorability
for any constant $k$ and still obey the zero-one law. On the other
hand we show that in any (semiregular) language strong enough to
express Hamiltonicity one can interpret arithmetic and thus the
zero-one law fails miserably. This answers a question of Blass and
Harary.
\end{abstract}

\section{Introduction}

\subsection{Motivation and definitions}
In this paper we study questions related to the following
observation: Natural graph properties are either held by most graphs
or not held by most graphs.

The \emph{binomial random graph} $G(n,p)$ is a probability
distribution over all labeled graphs of order $n$. For convenience
we assume that the vertex set is simply the set $\{1,2,\dotsc,n\}$.
Sampling from $G(n,p)$ is done by including every possible edge in
the graph at random with probability $p$ independently of all other
edges. Random graphs were extensively studied from the sixties,
starting with the seminal work of Erd\H{o}s and R\'{e}nyi
\cite{A:erdos_&_renyi1959, IP:erdos_&_renyi1960}. From the beginning
of the study of random graphs a phenomenon was spotted. Consider the
simplest model of random graph, $G(n,1/2)$, in which we pick a graph
uniformly at random from all the labeled graphs of $n$ vertices. For
many natural graphs properties --- properties like connectivity,
containing a Hamilton path or cycle, not being colorable in some
fixed number of colors, not being planar and containing a small
fixed graph as a subgraph
--- $G(n,1/2)$ \emph{asymptotically almost surely} (abbreviated
a.a.s.) has these properties. This means that the probability for
which a random graph has any one of these properties tends to one as
the size of the graph tends to infinity. We may remove the negation
before colorability and planarity and say that for any of these
properties either a.a.s.\ $G(n,1/2)$ has these property or a.a.s.\
$G(n,1/2)$ does not have it.

The phenomenon remains valid if we replace $1/2$ by $p$ for any
constant $0 < p < 1$, so from now on $p$ will be some fixed constant
real number (strictly between zero and one) representing the edge
probability of the random graph.

This phenomenon suggests the following definition:
\begin{definition}
Let $\mathcal{A}$ be a set of graph properties. We say that
\emph{$\mathcal{A}$ obeys the zero-one law} if for every property
$\varphi \in \mathcal{A}$ one has
\[ \lim_{n\rightarrow\infty} \Pr [G(n,p) \text{ has } \varphi] \in \{0,1\} .\]
\end{definition}
Having this definition we may rephrase the first sentence of this
introduction by saying that if $\mathcal{A}$ is a set of natural
graph properties then $\mathcal{A}$ obeys the zero-one law. But what
are ``natural graph properties''? %
\comment{We will not try to answer
this question --- we simply study graph properties that are studied
by graph theorists. That said, we can still say what we consider a
\emph{set} of natural graph properties is (actually, what is not
such a set). If we agree that connectivity is a natural graph
property, so being disconnected seems natural as well. Similarly, if
being bipartite is a natural graph property, and being 3-colorable
is a natural property, then being either bipartite or 3-colorable
should also be considered a natural property. That is, we want
$\mathcal{A}$ to be closed under negation and disjunction. This
leads us to study properties that are expressible in a \emph{formal
language}.}%
A natural interpretation is: all properties defined by a sentence in
a language $\Logic$. Indeed, the first zero-one law is of this
sort\footnote{In fact, to the best of our knowledge, in \emph{all}
of the zero-one laws for graphs the set of properties is the set of
sentences in some formal language.} where the language was \FO, the
first order language (Glebski\u{\i} et al.\ in
\cite{A:glebskii_&_kogan_&_liogonkii_&_talanov69} and Fagin in
\cite{A:fagin76}, see below). Unfortunately, most classical graph
theoretic properties are not expressible in \FO. A reasonable list
of the most extensively studied graph theoretic properties should
include connectivity, $k$-colorability, Hamiltonicity and also
planarity. Thus we suggest to consider the language for graphs
obtainable from first order logic strengthen by a property from this
list. This is done by adding a quantifier for the property, as
detailed below.

Considering graphs, the most basic formal language is the first
order language of graphs denoted here by \FO. It consists of the
following symbols:
\newcommand{\adj}{\sim}
\begin{enumerate}
\item Variables, denoted along this paper by lower case Latin letters
$x,y,z$. In \FO, variables stand for vertices solely.
\item Relations. There are exactly two of these: adjacency, denoted here
by $\adj$, and equality, denoted as usual by =. Thus it is possible
to write $x=y$ or $y\adj z$. %
The relations is what makes this language a language of graphs.
\item Quantifiers. Again, there are two, the existential $\exists$
and the universal $\forall$. These can be applied only on
variables which means that quantification in \FO\ is only possible over
vertices. %
\item Boolean connectives, like $\wedge, \vee, \neg$ and
$\rightarrow$.
\end{enumerate}
Notice that there are no constants and no functions in this
language. As usual, we shall also use parentheses and punctuation
marks for the benefit of readability.

For example, in \FO\ we may write
\[ \exists x \exists y \exists z.\, [\neg(x=y) \wedge \neg(x=z) \wedge
\neg(y=z) \wedge x\adj y \wedge x\adj z \wedge y\adj z] , \] %
which means ``there exists a triangle in the graph''. Another
example might be ``there are no isolated vertices'':
\[ \forall x \exists y.\, [\neg(x=y) \wedge x\adj y] . \]

There are few limitations on the language. Every formula must be of
finite length, and again, variables stand only for vertices so in
particular we may only quantify over vertices. This is a crucial
difference between \mbox{first order} and higher order logics.

In the \emph{second order} language of graphs, denoted here by \SO,
we have \emph{relational variables} (also called second order
variables). Adhering customary notation, we denote relational
variables by capital Latin letters. We write the arity of a second
order variable in superscript near the binding operator. If $A$ is
an unary relation, we use $x \in A$ instead of $A(x)$, and if $B$ is
a binary relation we may write $xBy$ instead of $B(x,y)$. Here are a
few examples of sentences in \SO\ representing graph properties:
\begin{description}
\item[Connectivity] A graph is connected if there is a
walk or a path\footnote{A \emph{walk} in a graph $G=(V,E)$ is a
sequence of vertices $v_0,v_1,\dotsc,v_l$ such that for any $1 \leq
i \leq l$ one has $v_{i-1} \adj v_i$. If additionally the vertices
are all distinct, it is called a \emph{path}. A \emph{cycle} is a
walk in which the first and last vertices are identical, and all
others are distinct.} between any two vertices. Equivalently, there
is an edge between the parts of any nontrivial partition:
\[ \forall A^1.\, [((\exists x.\, x \in A) \wedge (\exists x.\, x \notin A)) \rightarrow
(\exists x \in A, y\notin A.\, x \adj y) ] . \]
\item[3-colorability] %
A graph is $k$-colorable if there is a partition of the vertex set
into $k$ parts (colors), such that there is no edge between ny
vertices in the same part. This may be written as:
\begin{align*}
\exists R^1,G^1,B^1.\, [ &\predfont{Partition}(R,G,B)    \wedge \\ %
&\wedge \left( \forall x,y.\, (x,y\in R)\rightarrow x \not\sim y \right) \wedge \\ %
&\wedge \left( \forall x,y.\, (x,y\in G)\rightarrow x \not\sim y \right) \wedge \\ %
&\wedge \left( \forall x,y.\, (x,y\in B)\rightarrow x \not\sim y \right) ] , %
\end{align*} %
where $\predfont{Partition}(R,G,B)$ is a first order formula saying
that the sets $R,G$ and $B$ are mutually disjoint and their union
equals the set of vertices.
\item[Hamiltonicity] %
A Hamilton path in a graph is a path that visits every vertex
exactly once. A Hamilton cycle is a Hamilton path where the first and last vertices are adjacent. We say that a graph is Hamiltonian if it contains a Hamilton cycle.

In the following sentence $\predfont{Min}(\mathord{<},x)$ is a first
order formula expressing the fact that $x$ is a minimal element with
respect to an order $\mathord{<}$ (and similarly for
$\predfont{Max}(\mathord{<},y)$). The first three lines say that
$\mathord{<}$ is a linear order, the fourth line says that each
vertex is adjacent to its successor in that order and the last line
adds that the first vertex is adjacent to the last one.
\begin{align*}
\exists \mathord{<}^2.\, [ & \left( \forall x.\, \neg(x<x) \right) \wedge \\
               & \left( \forall x,y.\, ((x<y) \vee (y<x)) \wedge \neg
                 ((x<y) \wedge (y<x)) \right) \wedge \\ %
               & \left( \forall x,y,z.\, ((x<y) \wedge (y<z))
                 \rightarrow (x<z) \right ) \wedge \\ %
               & \left( \forall x,y.\, (\neg \exists z. (x<z) \wedge
                 (z<y)) \rightarrow x \sim y \right) \wedge \\ %
               & \exists x,y.\, \predfont{Min}(\mathord{<},x) \wedge
                 \predfont{Max}(\mathord{<},y) \wedge y \adj x] . %
\end{align*} %
\end{description}

\subsection{Previous results}
The first zero-one law was proven by Glebski\u{\i} et al.\ in
\cite{A:glebskii_&_kogan_&_liogonkii_&_talanov69} and independently
by Fagin \cite{A:fagin76}. They showed that the set of properties
describable in \FO\ obeys the zero-one law. In his paper Fagin
demonstrated that \SO\ does not obey the zero-law by expressing the
sentence ``the number of vertices is even'':
\begin{equation} \label{eq: Fagin's parity}
\exists P^2.\, \left[\forall x.\, \exists! y.\, (x \neq y \wedge xPy
\wedge yPx ) \right] .
\end{equation}
The quantifier $\exists!$ stands for there exists exactly one
element such that... Notice that parity is a property of a set (of
the set of vertices in our case) and thus adjacency does not appear
in this sentence. When the zero-one law fails for $\mathcal{A}$, we
may argue for less.
\begin{definition}
We say that
$\mathcal{A}$ obeys a \emph{limit law} if for every property $p \in
\mathcal{A}$ the limit
\[ \lim_{n\rightarrow\infty} \Pr[G(n,p) \text{ has } P] \]
exists.
\end{definition}
Clearly, the sentence in Equation \ref{eq: Fagin's parity}
demonstrates that \SO\ does not even obey the limit law.

At this point it is worth mentioning that \FO\ is rather weak. Of
the list of ``natural'' properties above --- connectivity,
Hamiltonicity, $k$-colorability (for $k\geq 2$), planarity and
having a fixed graph as a subgraph --- only the last is a first
order property. Thus it is clear that the zero-one law of Fagin and
Glebski\u{\i} et al.\ does not capture the phenomenon
aforementioned. On the other hand, as Fagin showed, \SO\ is too
expressive. There are many papers dealing with the problem of
finding a language that is strong enough to express some graph
properties that are not first order expressible on the one hand,
while still obeying the zero-one law on the other. In the following
we will mention some of these results. The following description is
far from being comprehensive. For a survey of the results in this
field see, e.g., \cite{IC:compton89}.

Looking at examples such as above (in particular the sentences
representing connectivity and $k$-colorability), it seems reasonable
to study the expressive power of the \emph{monadic second order}
logic. In the monadic second order language of graphs, denoted here
by \MSO, all second order variables must be of arity one, that is,
all second order variables represent sets. It was asked by Blass and
Harary in 1979 \cite{A:blass_&_harary1979} whether \MSO\ obeys the
zero-one law. In their 1985 paper \cite{A:Kaufmann_&_Shelah1985},
Kaufmann and Shelah provided a strong negative answer --- even a
fraction of \MSO\ is enough to express properties for which the
asymptotic probability does not exist. Let \MESO\ be the set of
formulas having the following structure: $\exists \bar{A}.\,
\varphi(\bar{A}, \bar{a})$, where $\bar{A}$ is a vector of unary
second order variables, $\bar{x}$ are first order variables and
$\varphi$ is a first order formula. That is, in \MESO\ we may only
existentially quantify over sets and at the beginning of the
formula. The 3-colorability sentence in the second example above is
a \MESO\ sentence. Notice that \MESO\ is not closed under negation.
In particular, connectivity is not \MESO-expressible, while being
disconnected obviously is. Kaufmann and Shelah showed that one can
interpret a segment of arithmetic in \MESO, and hence express
properties like, say, $0 \leq \sqrt{n} \leq 4 \imod{10}$. Clearly
the last property has no asymptotic probability; moreover, the limit
superior of its probability sequence is one and the limit inferior
is zero.\comment{A formal definition of arithmetization is given
before the statement of the new results of this study (see
Definition \ref{def: arithmetization} in Section \ref{subsec:
results} (page \pageref{def: arithmetization})).}

In order to give the full strength formulation of the result of
Kaufmann and Shelah let us define the notion of
\emph{arithmetization}. The language of arithmetic is the first
order language with universe set $\mathbb{N}$ and vocabulary
$\{<,+,\cdot\}$ where $<$ is the natural order of integers and
$+,\cdot$ have their usual meaning (see, e.g.,
\cite{B:ebbinghaus_&_flum_&_thomas1984}).
\begin{definition} \label{def: arithmetization}
Let $\Logic$ be a language for the class of graphs and let
$\mathcal{G} = (G_n)$ be a sequence of probability distributions
over graphs of order $n$. We say that \emph{the pair $(\Logic,
\mathcal{G})$ can interpret arithmetic for a function $f\colon
\mathbb{N} \to \mathbb{N}$} if for every sentence $\varphi$ in the
language of arithmetic there is a sentence $\psi_\varphi \in \Logic$
such that
\[ \lim_{n \rightarrow \infty} \Pr \left[ G_n \models \psi_\varphi \Longleftrightarrow  \mathbb{N} |_{f(n)} \models \varphi \right] = 1 . \]
If the pair $(\Logic, \mathcal{G})$ can interpret arithmetic for
some function $f\colon \mathbb{N} \to \mathbb{N}$ we say that
\emph{the pair $(\Logic, \mathcal{G})$ has arithmetization}.
\end{definition}

When a pair $(\Logic, \mathcal{G})$ has arithmetization it is, in a
sense, the farthest that can be from obeying a zero-one law. For
example, in this situation for any recursive real $\alpha \in [0,1]$
there is a sentence $\varphi_\alpha \in \Logic$ having asymptotic
probability $\alpha$. The aforementioned result of Kaufmann and
Shelah about $\MESO$ is of this sort, demonstrating that $(\MESO,
G(n,1/2))$ has arithmetization. There are situations in which $\FO$
has arithmetization. Let $\alpha \in (0,1)$ be rational, then the
pairs $(\FO, G(n,p=n^\alpha))$ and $(\FO, G_{n,d=n^{1-\alpha}})$
have arithmetization (where $G_{n,d=n^{1-\alpha}}$ is the
\emph{random regular graph} with degree $d$. See
\cite{A:shelah_&_spencer88} and \cite{A:haber_&_krivelevich2010}
respectively). Also, in \cite{A:haber_&_mullerpp} it is showed that
if $d \geq 2$ is a constant integer and $r$ is a small enough
constant then the pair $(\FO,G(n;\mathbb{T}^d,r))$ has
arithmetization (where $\mathbb{T}^d$ is the $d$-dimensional torus
and $G(n;\mathbb{T}^d,r)$ is the \emph{random geometric graph} with
distance parameter $r$).

In view of the last result it seems reasonable to look for less
expressive fragments of second order logic for which the zero-one
law will hold. In their 1987 paper \cite{IP:kolaitis_&_vardi1987}
Kolaitis and Vardi proved a zero-one law for the \emph{\mbox{strict
$\Sigma^1_1$}} language --- the set of sentences of the form
$\exists \bar{S}.\, \psi(\bar{S})$ where $\psi$ is from the
\emph{Bernays-Sch\"{o}nfinkel} class. That is, the set of all
sentences of the form $\exists \bar{S}.\, \exists \bar{x}.\, \forall
\bar{y}.\, \varphi(\bar{S},\bar{x},\bar{y})$ where $\bar{S}$ is a
vector of second order variables, $\bar{x}$ and $\bar{y}$ are
vectors of first order variables and
$\varphi(\bar{S},\bar{x},\bar{y})$ is a quantifier free formula.
Notice that 3-colorability is a \mbox{strict $\Sigma^1_1$} property,
as well as disconnectivity. On the other hand connectivity is not a
\mbox{strict $\Sigma^1_1$} property. A line of research was started
by \cite{IP:kolaitis_&_vardi1987}, aiming to characterize the
$\Sigma_1^1$ fragments defined by first-order prefix classes
according to adherence to the zero-one law. The classification was
completed in \cite{IP:lebars1998}. See
\cite{IP:kolaitis_&_vardi2000} for a survey.

Another family of languages studied in this context is the family of
languages one get from adding a recursive operator to the first
order logic. These languages can express connectivity but not
$k$-colorability. There are a few such languages known to obey the
zero-one law \cite{A:talanov1981, A:blass_&_gurevich_&_Kozen1985}.

The last result we shall mention in this section deals with a
different strengthening of the first order logic. In
\cite{IP:kolaitis_&_kopparty2009} Kolaitis and Kopparty considered
the language one gets by augmenting a \emph{parity} quantifier to
\FO. Their result was a \emph{modular} limit-law:
\begin{theorem}[{\cite{IP:kolaitis_&_kopparty2009}}] \label{thm: kolaitis and kopparty}
Let $\FO[\oplus]$ be the regular language obtained by adding parity to the
first order language. Then for every property $P \in \FO[\oplus]$ there are
two rational numbers $\alpha_0, \alpha_1$ such that
\[ \lim_{n=2k+i \rightarrow \infty} \Pr[G(n,p) \text{ has } P] = \alpha_i \]
\end{theorem}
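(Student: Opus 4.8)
The plan is to reduce the claim, via an Ehrenfeucht--Fra\"iss\'e analysis together with the quantifier elimination of the almost-sure first-order theory, to the assertion that the joint distribution over $\mathbb{F}_2$ of the parities of the numbers of induced copies of small graphs in $G(n,p)$ converges along each residue class modulo $2$; and then to establish that convergence by a character-sum (Fourier-over-$\mathbb{F}_2$) estimate.

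\textbf{Reduction to a limit for the type.} Fix a property $P \in \FO[\oplus]$ of quantifier rank $q$. First I would set up an Ehrenfeucht--Fra\"iss\'e game for $\FO[\oplus]$: a round is either an ordinary pebble move or a \emph{parity move}, in which Spoiler exhibits a finite colouring of one side and Duplicator must answer with a colouring matching the parity of every colour class. Then there are only finitely many $\FO[\oplus]$-types of rank $q$, and $P$ is a union of them, so it suffices to show that for each such type $\tau$ the probability that $G(n,p)$ realises $\tau$ converges as $n = 2k + i \to \infty$. Moreover we may condition throughout on the a.a.s.\ event that $G(n,p)$ satisfies every extension axiom up to level $q$, which costs only $o(1)$.

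\textbf{Collapsing the first-order content.} The almost-sure first-order theory of $G(n,p)$ --- the theory of the Rado graph, the same for every constant $p$ --- has quantifier elimination. So on the conditioning event the rank-$q$ first-order type of $G(n,p)$ is a single fixed type, and every first-order formula is, over it, equivalent to a quantifier-free one. Feeding this into the $\FO[\oplus]$-type through a (somewhat delicate) induction on the nesting of parity quantifiers --- at each level the set being counted is, up to boundedly many exceptional vertices, a Boolean combination of neighbourhoods, so its parity is an $\mathbb{F}_2$-linear combination of counts of fixed colour/adjacency patterns, which in turn collapse, after inclusion--exclusion, to parities of numbers of induced copies of fixed graphs --- one obtains: \emph{on the conditioning event, the rank-$q$ $\FO[\oplus]$-type of $G(n,p)$ is a Boolean function, depending on $n$ only through $i = n \bmod 2$, of the random vector}
\[
X \;=\; \bigl(\,\#\{\text{induced copies of }H\text{ in }G(n,p)\}\bmod 2\,\bigr)_{|V(H)| \le q}\;\in\;\mathbb{F}_2^{m}.
\]
The dependence on $i$ is already present at the bottom, since $\oplus x.\,(x = x)$ holds exactly when $n$ is odd; the first \emph{genuinely random} coordinate appears only once the rank is large enough to describe an asymmetric graph $H$ (i.e.\ one with $\mathrm{Aut}(H)$ trivial, which requires at least six vertices), the point being that $\oplus$-quantifiers are $\mathbb{F}_2$-summation and a configuration symmetric under a transposition of variables reduces to fewer variables.

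\textbf{The distribution of $X$, and the main obstacle.} It remains to prove that $X$ converges in distribution and to identify the limit. The claim is that the limit is the uniform distribution on the $\mathbb{F}_2$-affine subspace $V_i \subseteq \mathbb{F}_2^m$ cut out by the \emph{trivial} linear identities among the coordinates of $X$ --- those forced by handshake-type relations, by the identical vanishing mod $2$ of certain induced-copy counts (e.g.\ those attached to graphs with even automorphism group), and by the value of $n \bmod 2$ (so $V_0,V_1$ are translates of one another). Granting this, $\Pr[G(n,p)\text{ realises }\tau] \to |\{x \in V_i : x \text{ makes }\tau\text{ true}\}|/|V_i| =: \alpha_i \in \mathbb{Q}$, depending only on $i$, which is exactly the theorem --- and explains why the limits are rational yet in general not $0$ or $1$. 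Convergence to uniform on $V_i$ is equivalent to the bias bound: for every $c \in \mathbb{F}_2^m$ not annihilating the trivial subspace,
\[
\Bigl|\,\E\bigl[(-1)^{\langle c, X\rangle}\bigr]\,\Bigr| \;=\; \Bigl|\,\E\Bigl[(-1)^{\,\sum_H c_H\cdot\#\{\text{induced copies of }H\}}\Bigr]\Bigr| \;\xrightarrow[n\to\infty]{}\; 0 .
\]
Expanded in the $\binom{n}{2}$ independent edge-indicator variables of $G(n,p)$, the exponent is an explicit polynomial over $\mathbb{F}_2$, and the estimate reduces to showing that, for non-trivial $c$, this polynomial is robustly ``spread out'' --- it contains many variables of large influence --- so that the resulting Gauss-type sum decays geometrically, using both the independence of the edges and the fact that $p$ is a constant with $|1-2p| < 1$. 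This last step, together with the precise determination of the trivial subspace (an error there would falsify the value of $\alpha_i$ or break convergence), is the heart of the proof; by contrast the Ehrenfeucht--Fra\"iss\'e game, quantifier elimination, and the inclusion--exclusion bookkeeping of the earlier steps are comparatively routine.
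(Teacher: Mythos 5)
This theorem is not proved in the paper at all: it is quoted as background from Kolaitis--Kopparty \cite{IP:kolaitis_&_kopparty2009}, so the only meaningful comparison is with their original argument. Your outline is in fact the same two-stage strategy they use: first show that every $\FO[\oplus]$ sentence is a.a.s.\ equivalent to a Boolean combination of parity statements about counts of fixed small configurations (plus $n \bmod 2$), then show that the vector of these parities converges in distribution to the uniform measure on an affine subspace of $\mathbb{F}_2^m$ determined by the forced linear relations, via $\mathbb{F}_2$-character sums. So the shape is right, but as written the proposal leaves both load-bearing steps as assertions, and one side claim is wrong.

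Concretely: (a) the ``collapsing the first-order content'' step is the main technical contribution of Kolaitis--Kopparty, and quantifier elimination for the almost-sure first-order theory plus extension axioms does not deliver it. A parity quantifier with parameters creates genuinely parameter-dependent bits --- e.g.\ $\oplus x.\,(x \sim y)$ is the parity of $d(y)$, which is not determined by the first-order type of $y$ nor by any global sentence --- so the induction has to carry an invariant about parities of counts of fixed graphs \emph{rooted at the free variables}, and the reduction of nested parity quantifiers to such rooted counts is exactly the delicate part you wave through with ``Boolean combination of neighbourhoods up to boundedly many exceptions.'' (b) The bias estimate $\E[(-1)^{\langle c, X\rangle}] \to 0$ for nontrivial $c$ is asserted (``robustly spread out,'' ``many variables of large influence'') but not argued; since you yourself identify it as the heart of the proof, a proof would need an actual mechanism for the geometric decay of this Gauss-type sum over the edge indicators. (c) The claim that the first genuinely random coordinate of $X$ requires an asymmetric $H$ on at least six vertices is false for the vector you defined: the parities of the numbers of induced copies of $K_2$ (the edge count) and of triangles are already asymptotically uniform nondegenerate bits. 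The automorphism-parity cancellation you invoke applies to \emph{labeled} copy counts, not to induced/unlabeled ones, and since you correctly stress that pinning down the ``trivial subspace'' exactly is what determines the values $\alpha_0,\alpha_1$, this labeled-versus-unlabeled confusion is a substantive gap rather than a cosmetic one.
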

The result generalizes in the natural way to general modulo $k$
operators for any constant $k$. Theorem \ref{thm: kolaitis and
kopparty} means that the fact that a language is able to express
parity is actually not that bad. Indeed, parity has no asymptotic
probability, but we can think of the graph
sequence\footnote{Actually, we have a sequence of probability spaces
over graphs.} as two separate sequences, odd and even, and then we
do get a limit for the asymptotic probability in each of these
sequences. Adding parity clearly lets us say different things for
odd and even graphs, Theorem \ref{thm: kolaitis and kopparty} tells
us that it does not give more in terms of expressive power.

As mentioned above, there are many other papers along this line. To
the best of our knowledge none of these papers presented a formal
language strong enough to express $3$-colorability and connectivity
while obeying the zero-one law, The same question for Hamiltonicity
was explicitly asked first in \cite{A:blass_&_harary1979} and then
by many others.

The results in this paper (Theorems \ref{thm: conn+chr obey 0-1} and
\ref{thm: L(Q_ham) has arithmetic}) give the following answer to the
question above: On the one hand, there is a regular language able to
express connectivity and $k$-colorability for any fixed $k\geq 2$
while obeying the zero-one law. On the other hand, our main result
states that any semiregular language able to express Hamiltonicity
can express arithmetic as well (and therefore it violates even the
modular limit law of Kolaitis and Kopparty). Planarity behaves
similarly to connectivity and colorability. A result including
planarity will be published elsewhere.

Before stating our theorems we shall present the connected notions
of regular languages and Lindstr\"{o}m quantifiers.

\subsection{Regular languages and Lindstr\"{o}m quantifiers}
There is a trivial ``language'' that can express the properties
listed above while simultaneously obeying the zero-one law ---
simply take $\FO$ and add the sentences ``$G$ is Hamiltonian'',
``$G$ is planar'' and so on. Clearly this language misses some
notion of closure. To avoid such trivialities we need to define what
kind of languages are accepted. The definitions in this section
follow the ideas of Lindstr\"{o}m \cite{A:lindstrom1966c,
A:lindstrom1969}. Our notation is taken from
\cite{B:ebbinghaus_&_flum2006}, in which a full treatment of the
notions in this section may be found.

A language $\Logic$ is called \emph{semiregular} if it is closed
under ``first order operations'' and it is also closed under
substitution of a formula for a predicate. That is, $\Logic$ is
required to contain the atomic formulas (in our case, formulas of
the form $x=y$ and $x\adj y$), to be closed under Boolean
connectives (e.g., $\neg$ and $\wedge$) and existential
quantification and finally to allow redefinition of the predicates
through formulas in $\Logic$. A language is called \emph{regular} if
it is semiregular and closed under relativization
--- the operation of replacing the universe by an $\Logic$ definable set. Let us
give a concrete definition for the case of graphs:
\begin{definition} {\ }
\begin{enumerate}
\item A language of graphs $\Logic$ is said to be \emph{semiregular} if:
\begin{itemize}
\item All atomic formulas are in $\Logic$.
\item If $\varphi \in \Logic$ then $\neg \varphi \in \Logic$.
\item If $\varphi,\psi \in \Logic$ then $\varphi \wedge \psi \in
 \Logic$.
\item If $\varphi(x) \in \Logic$ then $\exists x.\, \varphi(x) \in
 \Logic$.
\item (Weak substitution) If $\varphi(x,y),\psi \in \Logic$ and $\varphi$ is
 anti-reflexive and symmetric, then there exists a sentence $\psi'
 \in \Logic$ such that
 \[ G \models \psi \Longleftrightarrow (V, \{ (x,y) \in V\times V
 \mid G\models  \varphi(x,y)\} ) \models \psi' \]
 where $G = (V,E)$ is a graph.
\end{itemize}
\item $\Logic$ is said to be \emph{regular} if in addition
\begin{itemize}
\item (Full substitution) Let $\varphi(x)$ and $\psi$ be formulas in $\Logic$ and
denote $V_\varphi = \{ x \in V \mid G\models \varphi(x)\}$. Then
there exists a sentence
 $\psi' \in \Logic$ such that
 \[ G \models \psi \Longleftrightarrow (V_\varphi, G[V_\varphi]) \models \psi' \]
 where $G = (V,E)$ is a graph, and $G[V']$ is the graph spanned on
 the vertex subset $V' \subset V$.
\end{itemize}
\end{enumerate}
\end{definition}

In order to get the minimal semiregular language that can express a
property $K$ we use generalized quantifiers or \emph{Lindstr\"{o}m
quantifiers}. Let $K$ be a property of graphs. We think of $K$ as
the set of all graphs having this property. As a graph property, $K$
is closed under isomorphism. Given $K$ we define the graph language
$\Logic(Q_K)$ as follows.
\begin{definition} {\label{def: Lindstrom Q, Flum's variant}\ }
\begin{enumerate}
\item
Let $K$ be a graph property. The set of formulas $\Logic(Q_K)$ is
the closure of the atomic formulas by the conjunction and negation
connectives, the existential quantifier and another quantifier
$Q_K$. The syntax of the new quantifier is $Q_K x y.
\varphi(x,y,\bar{a})$ where $\varphi(x,y,\bar{a}) \in \Logic(Q_K)$
is an antireflexive and symmetric formula in which $x,y$ are free
variables and $\bar{a}$ are parameters.

\item Given a graph $G=(V,E)$, the satisfaction of formulas of the
form $Q_K xy.\ \varphi(x,y,\bar{a})$ is determined by
\[ G \models Q_K xy.\, \varphi(x,y,\bar{a}) \Longleftrightarrow
(V,\{(x,y) \in V\times V \mid G \models \varphi(x,y,\bar{a})\}) \in
K .
\]
\end{enumerate}
\end{definition}
Of course, Lindstr\"{o}m quantifiers are not restricted to graphs
and can be defined for any vocabulary (indeed, usually that is the
case).

Here are three simple examples over sets: First notice that
\[ \forall x.\, \varphi(x) \Longleftrightarrow Q_{\{U\}} x.\, \varphi(x) \]
where $U$ is the universe set. The expression on the right hand side
means that we consider the set of all $x$'s for which $\varphi(x)$
holds, and then we check if this set belongs to the singleton
$\{U\}$, that is, if it is the whole universe. Similarly $\exists
x.\, \varphi(x) \Longleftrightarrow Q_{P(U) \setminus
\{\varnothing\}} x.\, \varphi(x)$ where $P(U)$ is the power set of
$U$. As a final example we mention that the parity quantifier of
\cite{IP:kolaitis_&_kopparty2009} mentioned above may be expressed
as a Lindstr\"{o}m quantifier e.\,g.\ by writing  $\oplus x.\,
\varphi(x) \Longleftrightarrow Q_{\{ A \in P(U) \mid |A| \text{ is
even}\}} x.\, \varphi(x)$.

The basic result regarding the Lindstr\"{o}m quantifiers is that for any property $K$, the language
$\Logic(Q_K)$ is the smallest semiregular language that can express
$K$ \cite{A:lindstrom1966c}.

The Lindstr\"{o}m quantifier $Q_K$ acts as an oracle that let us
check if the graph that we get by replacing the edge relation with a
defined one is in $K$. We can create more expressive languages by
allowing more freedom in defining the graphs to be queried. We
describe three variants here, in increasing expressive power.

Given a graph property $K$ we define the \emph{relativized} language
$\Logic^{\mathrm{rl}}(Q_k)$ similarly to the above, but this time we
have another formula defining which of the original vertices are
included in the queried graph. We denote the resulting language by
$\Logic^{\mathrm{rl}}(Q_k)$.
\begin{definition} {\ }
\begin{enumerate}
\item
Let $K$ be a graph property. The set of formulas
$\Logic^{\mathrm{rl}}(Q_k)$ is the closure of the atomic formulas by
first order operations together with additional quantifier $Q_K$.
The syntax of the new quantifier is given by $Q_K vxy.\,
\psi(v,\bar{a}), \varphi(x,y,\bar{a})$ where $\varphi(x,y,\bar{a})
\in \Logic^{\mathrm{rl}}(Q_k)$ is an antireflexive and symmetric
formula in which $x,y$ are free variables, $\psi(v,\bar{a}) \in
\Logic^{\mathrm{rl}}(Q_k)$ is a formula in which $v$ is free and
$\bar{a}$ are parameters.

\item Given a graph $G=(V,E)$, the satisfaction of formulas of the
form $Q_K vxy.\, \psi(v,\bar{a}), \varphi(x,y,\bar{a})$ is
determined as follows. Let $V|_\psi = \{v\in V \mid
\psi(v,\bar{a})\}$. Then
\begin{align*}
&  G \models Q_K vxy.\, \psi(v,\bar{a}), \varphi(x,y,\bar{a}) \Longleftrightarrow \\
&(V|_\psi,\{(x,y) \in V|_\psi \times V|_\psi \mid
\varphi(x,y,\bar{a})\}) \in K .
\end{align*}
\end{enumerate}
\end{definition}
It is easy to verify that $\Logic^{\mathrm{rl}}(Q_k)$ is regular for
any $K$, Moreover, it is the inclusion minimal regular language that
is able to express $K$.

In the next variant we allow to redefine the other predicate in the
graph vocabulary as well, namely, the equality predicate. This means
that the vertex set will be the quotient set of some equivalence
relation. We denote the resulting language by
$\Logic^{\mathrm{eq}}(Q_k)$.
\begin{definition} {\ }
\begin{enumerate}
\item
Let $K$ be a graph property. The set of formulas
$\Logic^{\mathrm{eq}}(Q_k)$ is the closure of the atomic formulas by
first order operations together with additional quantifier $Q_K$.
The syntax of the new quantifier is given by $Q_K vuwxy.
\psi(v,\bar{a}), \varphi_=(u,w,\bar{a}), \varphi_\adj(x,y,\bar{a})$
where $\varphi_\adj(x,y,\bar{a}) \in \Logic^{\mathrm{eq}}(Q_k)$ is
an antireflexive and symmetric formula in which $x,y$ are free
variables, $\varphi_=(u,w,\bar{a}) \in \Logic^{\mathrm{eq}}(Q_k)$ is
a reflexive, symmetric and transitive formula in which $u,w$ are
free variables, $\psi(v,\bar{a}) \in \Logic^{\mathrm{eq}}(Q_k)$ is a
formula in which $v$ is a free variable and $\bar{a}$ are
parameters.

\item Given a graph $G=(V,E)$, the satisfaction of formulas of the
form $Q_K vuwxy. \psi(v,\bar{a}), \varphi_=(u,w,\bar{a}),
\varphi_\adj(x,y,\bar{a})$ is determined as follows. Let $V' = \{v
\in V \mid \psi(v,\bar{a})\}$ and let $R$ be the equivalence
relation induced over $V'$ by $\varphi_=$ (that is, $R = \{(u,w) \in
V' \times V' \mid \psi_=(u, w, \bar{a}) \}$). Then the semantics of
$Q_K$ in $\Logic^{\mathrm{eq}}(Q_K)$ is given by
\begin{align*}
&  G \models Q_K vuwxy. \psi(v,\bar{a}), \varphi_=(u,w,\bar{a}),
\varphi_\adj(x,y,\bar{a}) \Longleftrightarrow \\ %
&(V' / R , \{([x],[y]) \in (V' / R) \times (V' / R) \mid
\varphi_\adj(x,y,\bar{a})\}) \in K .
\end{align*}
\end{enumerate}
\end{definition}
Generally, care must be taken in order to make sure that the edges
induced by $\psi_\adj$ are well defined.

The strongest variant lets us redefine the vertices as \emph{tuples}
of vertices. Let $l$ be an integer.  The syntax and the semantics of
$\Logic^{\mathrm{tu}}(Q_k)$ are simply vectorized versions of the
syntax and semantics of $\Logic^{\mathrm{eq}}Q_k)$. The formula
$\psi(\bar{v},\bar{a})$ determines which $l$-tuple is a vertex,
$\varphi_=(\bar{u},\bar{w},\bar{a})$ is an equivalence relation
defining equality between $l$-tuples and
$\varphi_\adj(\bar{x},\bar{y},\bar{a})$ defines the edge set of the
graph.

\begin{definition} {\label{def: Lindstrom, tuple sense}\ }
\begin{enumerate}
\item
Let $K$ be a graph property. The set of formulas
$\Logic^{\mathrm{tu}}(Q_k)$ is the closure of the atomic formulas by
first order operations together with additional quantifier $Q_K$.
The syntax of the new quantifier is given by $Q_K
\bar{v}\bar{u}\bar{w}\bar{x}\bar{y}. \psi(\bar{v},\bar{a}),
\varphi_=(\bar{u},\bar{w},\bar{a}),
\varphi_\adj(\bar{x},\bar{y},\bar{a})$ where:
\begin{enumerate}
\item all the vectors $\bar{v}, \bar{u}, \bar{w}, \bar{x}$ and $\bar{y}$ are of the same
length denoted henceforth by $l$;
\item $\varphi_\adj(\bar{x},\bar{y},\bar{a})
\in \Logic^{\mathrm{tu}}(Q_k)$ is an antireflexive and symmetric
formula in which $\bar{x},\bar{y}$ are $l$-tuples of free variables;
\item $\varphi_=(\bar{u},\bar{w},\bar{a}) \in \Logic^{\mathrm{tu}}(Q_k)$
is a reflexive, symmetric and transitive formula in which
$\bar{u},\bar{w}$ are $l$-tuples of free variables;
\item $\psi(\bar{v},\bar{a}) \in \Logic^{\mathrm{tu}}(Q_k)$ is a formula
in which $\bar{v}$ is an $l$-tuple of free variables; and
\item $\bar{a}$ are parameters.
\end{enumerate}

\item Given a graph $G=(V,E)$, the satisfaction of formulas of the
form $Q_K \bar{v}\bar{u}\bar{w}\bar{x}\bar{y}.
\psi(\bar{v},\bar{a}), \varphi_=(\bar{u},\bar{w},\bar{a}),
\varphi_\adj(\bar{x},\bar{y},\bar{a})$ is determined as follows. Let
$V' = \{\bar{v} \in V^l \mid \psi(\bar{v},\bar{a})\}$ and let $R$ be
the equivalence relation induced over $V'$ by $\varphi_=$. Then the
semantics of $Q_K$ in $\Logic^{\mathrm{tu}}(Q_K)$ is given by
\begin{align*}
&  G \models Q_K \bar{v}\bar{u}\bar{w}\bar{x}\bar{y}.
\psi(\bar{v},\bar{a}), \varphi_=(\bar{u},\bar{w},\bar{a}),
\varphi_\adj(\bar{x},\bar{y},\bar{a}) \Longleftrightarrow \\ %
&(V' / R , \{([\bar{x}],[\bar{y}]) \in (V' / R) \times (V' / R) \mid
\varphi_\adj(\bar{x},\bar{y},\bar{a})\}) \in K .
\end{align*}
\end{enumerate}
\end{definition}
As before, one must verify that the edges induced by $\psi_\adj$ are
well defined.

\subsection{Results} \label{subsec: results}
Our results are of mixed nature. We show that there are regular
languages able to express any first order sentence, connectivity and
$k$-colorability for any fixed $k$, and still obey the zero-one law
for $G(n,p)$ for any constant $0 < p < 1$. On the other hand, for
the same model of random graphs we show that in any semiregular
language able to express Hamiltonicity, there is a sentence with no
limiting probability.

Our results are concerned with connectivity, $k$-colorability and
Hamiltonicity, thus we define the following sets of graphs:
\begin{enumerate}
\item \Conn, the set of all connected graphs,
\item \Ham, the set of all Hamiltonian graphs, that is, the set of
graphs having a Hamilton cycle, and
\item \Chr{k}, the set of all graphs having chromatic number $k$.
\end{enumerate}
Now we can state our results.
\begin{theorem} \label{thm: conn+chr obey 0-1}
For every constant $0 < p < 1$ and for every sentence $\varphi$ in
$\Logic^{\mathrm{tu}}(Q_\Conn,Q_{\Chr{2}},Q_{\Chr{3}},\dotsc)$,
\[ \lim \Pr [G(n,p) \models \varphi] \in \{0,1\} . \]
\end{theorem}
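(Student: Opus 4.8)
The plan is to prove a zero-one law for $\Logic^{\mathrm{tu}}(Q_\Conn,Q_{\Chr 2},Q_{\Chr 3},\dots)$ by an Ehrenfeucht–Fra\"iss\'e / extension-axiom argument adapted to the new quantifiers. The classical proof of Fagin and Glebski\u{\i} et al.\ shows that for each $r$ there is a (finite, first-order axiomatizable) set of \emph{extension axioms} that $G(n,p)$ satisfies a.a.s., and that any two models of all extension axioms agree on all \FO\ sentences of quantifier rank $r$. I would like to show the analogous robustness for the quantifiers $Q_\Conn$ and $Q_{\Chr k}$. The key point is that when these quantifiers are evaluated inside a random graph, the queried graph is \emph{a.a.s.\ forced} to be connected (respectively, to have chromatic number exactly $k$ for $k$ in a restricted range determined by the formula), so the oracle answer is determined by the extension-axiom type of the defining formulas rather than by the particular graph. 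Concretely, I would prove by induction on formula structure: for every $\Logic^{\mathrm{tu}}$-formula $\chi(\bar z)$ with free variables $\bar z$ there is a quantifier rank $r=r(\chi)$ such that the truth value of $\chi(\bar b)$ in any graph satisfying the rank-$r$ extension axioms depends only on the ``$r$-type'' of $\bar b$ (finitely many such types), and hence $\lim \Pr[G(n,p)\models\chi]$ exists and is $0$ or $1$ for sentences.

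The heart of the matter is the inductive step for $Q_\Conn$ and $Q_{\Chr k}$. Suppose $\psi_{\mathrm{query}} = Q_\Conn \bar v\bar u\bar w\bar x\bar y.\ \psi(\bar v),\varphi_=(\bar u,\bar w),\varphi_\adj(\bar x,\bar y)$ with parameters $\bar a$, and assume inductively that each of $\psi,\varphi_=,\varphi_\adj$ is, under the extension axioms, equivalent to a \emph{bounded-rank first-order formula} $\psi^*,\varphi_=^*,\varphi_\adj^*$ (this is the real content of the induction — that the $Q$-quantifiers can be ``unfolded'' into \FO\ on models of sufficiently strong extension axioms). Then on $G(n,p)$ the queried graph $H=(V'/R, E_{\varphi_\adj})$ is itself definable in $G(n,p)$ by a bounded-rank first-order interpretation. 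I would then argue: either the interpretation is ``degenerate'' for all large $n$ on a.a.e.\ graph — e.g.\ $V'/R$ has bounded size, or $\varphi_\adj^*$ reduces to $x=y$, etc., in which case $H$ is an a.a.s.-fixed finite graph and $Q_\Conn$/$Q_{\Chr k}$ returns a fixed answer — or else $V'/R$ is a.a.s.\ large, and then the genericity supplied by the extension axioms makes $H$ a.a.s.\ connected (for any first-order interpretation into a random graph producing a linear-sized vertex set with a nontrivial symmetric edge relation, the resulting graph is a.a.s.\ connected, indeed has diameter $2$ on the generic part), while for $Q_{\Chr k}$ one observes that $H$ a.a.s.\ contains arbitrarily large cliques (so $\chi(H)\ge k$ fails for every fixed $k$ unless the interpretation is degenerate), making $Q_{\Chr k}$ return a fixed answer too. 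In every case the answer of the quantifier is a constant depending only on finitely much data about the types of the parameters $\bar a$, which is exactly what the induction needs.

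The main obstacle, and where I expect the real work to be, is controlling the edge relation $\varphi_\adj$ on the quotient $V'/R$ when the interpretation is \emph{not} degenerate but also not fully generic — for instance when $\varphi_\adj^*(\bar x,\bar y)$ holds on a sparse or highly structured subset of pairs, or when the equivalence relation $\varphi_=$ collapses the linear-sized $V'$ down to a structured quotient. I would handle this by a case analysis on the ``generic behaviour'' of bounded-rank first-order formulas on tuples in $G(n,p)$: by the extension-axiom analysis, for each fixed rank $r$ the set of $r$-types of $l$-tuples is finite, each type is realized by an a.a.s.-known fraction of tuples, and the truth of $\varphi_\adj^*$ between two tuples depends only on their joint $2l$-type; this gives a bounded ``blueprint'' graph on the finite type set, and the structure of $H$ on the generic part is a blow-up of this blueprint, whose connectivity and clique number are then determined. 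One must separately dispose of the non-generic, lower-order tuples (there are $o(n^l)$ of them), showing they cannot, say, disconnect $H$ or cap its chromatic number below the blueprint's — this is the kind of routine but delicate book-keeping that the full proof will have to carry out, and it is plausibly the longest part of the argument. I would also need to verify that the a.a.s.-events for the various subformulas can be intersected (finitely many, so a union bound suffices) and that well-definedness of the induced edge relation under $R$ is itself handled by an extension axiom; these are straightforward.
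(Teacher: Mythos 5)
Your overall plan is in essence the same as the paper's: an induction on formula structure in which the truth value of each subformula at a tuple of parameters depends, a.a.s., only on the finitely many ``types'' of the parameters, so that the queried graph is a blow-up of a finite blueprint graph (in the paper this is the quotient graph $H$ on the equivalence classes of vertices, or of $l$-tuples, determined by their adjacency/equality pattern with the parameters $\bar{a}$), and the oracle answer is read off from that finite object plus a routine concentration argument. The gap is in the specific dichotomy you put at the heart of the inductive step, and it is not a cosmetic one. It is false that every non-degenerate interpretation (linear-sized vertex set, nontrivial symmetric edge relation) yields an a.a.s.\ connected queried graph: take the edge formula ``$x\sim y$ and $x\sim a$ and $y\sim a$'' on the full vertex set; then all non-neighbors of $a$ are isolated and the queried graph is a.a.s.\ disconnected, even though nothing is degenerate. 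It is equally false that non-degenerate interpretations force arbitrarily large cliques: the edge formula ``exactly one of $x,y$ is adjacent to $a$'' produces a complete bipartite graph between two linear-sized classes, with clique number $2$ and chromatic number $2$, so $Q_{\Chr{2}}$ returns TRUE on a perfectly non-degenerate interpretation. In other words, the answers of $Q_\Conn$ and $Q_{\Chr{k}}$ are not ``a.a.s.\ forced'' to be YES (respectively NO); they are forced to agree with the blueprint, which can go either way, and this is exactly what must be proved.

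The correct resolution --- which your last paragraph gestures at but does not carry out --- is the one in the paper: a.a.s.\ the connectivity of the queried graph $F$ coincides with the connectivity of the finite quotient graph $H$ (proved by lifting a path in $H$ to $F$ using the fact that each non-singleton class has linear size, so the needed neighbors exist with probability $1-e^{-\Omega(n)}$), and the chromatic number of $F$ is either exactly $\chi(H)$, when every class is internally independent, or $\Theta(n/\log n)$, when some class carries complete, $G$-induced, or complement-of-$G$-induced internal edges; the latter case uses Bollob\'as's theorem on $\chi(G(n,p))$. Note in particular that deciding whether $\chi(F)=k$ cannot be done from the clique number alone, which is what your sketch relies on for $Q_{\Chr{k}}$: when the answer is YES you must actually compute the chromatic number of the blow-up, and when a class spans a copy of $G(n,p)$ you need the chromatic number (not just the clique number) to grow past $k$ in a way that is stable under the $o(n^l)$ non-generic tuples. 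Once the inductive claim is restated as ``the a.a.s.\ truth value depends only on the type of the parameters, with the quantifier answer computed from the blueprint as above,'' your framework (types of $l$-tuples, classes of size $\Theta(n^{l-s})$, union bounds over the finitely many subformulas, well-definedness of the induced edge relation) matches the paper's proof of Theorem \ref{thm: conn+chr obey 0-1}.
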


\begin{remark}
Let $\Pln$ be the set of all planar graphs. Using the same technique
used for proving Theorem \ref{thm: conn+chr obey 0-1} one may add a
planarity Lindstr\"{o}m quantifier to the language appearing in the
statement of the theorem. The following result will be published
elsewhere:

For every constant $0 < p < 1$ and for every sentence $\varphi$ in
the graph language
$\Logic^{\mathrm{tu}}(Q_\Conn,Q_\Pln,Q_{\Chr{2}},Q_{\Chr{3}},\dotsc)$,
\[ \lim \Pr [G(n,p) \models \varphi] \in \{0,1\} . \]
\end{remark}

\begin{theorem} \label{thm: L(Q_ham) has arithmetic}
For any constant $0<p<1$, the pair $(\Logic(Q_\Ham), G(n,p))$ can
interpret arithmetic for some function $f = \Omega(\log \log \log
n)$.
\end{theorem}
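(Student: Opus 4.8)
The plan is to show that from a single Hamiltonicity oracle we can build, a.a.s., a gadget inside $G(n,p)$ that behaves like an ordered structure of size roughly $m=\Theta(\log\log\log n)$, on which first-order sentences of arithmetic can be evaluated. The overall strategy follows the Kaufmann--Shelah arithmetization philosophy, but the crucial difference is that we have only one quantifier $Q_\Ham$ and no relativization, so everything must be encoded through the edge relation of an auxiliary graph handed to the oracle. First I would isolate, inside a typical $G(n,p)$, a large family of vertices that are distinguishable by first-order formulas with parameters: fix a small number of \emph{boundary} vertices $\bar a$; then each other vertex $v$ is characterized by its adjacency pattern to $\bar a$, which gives $2^{|\bar a|}$ types. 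Pushing this further, by a standard ``telescoping'' argument one can, using $O(\log\log n)$ parameters, first-order-define $\Omega(\log\log\log n)$ pairwise-inequivalent ``coordinate'' vertices $c_0,c_1,\dots,c_{m-1}$, each with its own defining formula, and also define a linear order on them and a successor relation; this is where the growth rate $f=\Omega(\log\log\log n)$ comes from (each additional bit of addressing power costs an exponential in the number of parameters, which in turn must survive with positive probability, costing another logarithm).

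Next I would use $Q_\Ham$ as a decision oracle on carefully constructed auxiliary graphs whose Hamiltonicity encodes an arithmetic fact about these $m$ coordinates. The key observation is that Hamiltonicity is NP-complete and, more usefully here, that one can design graphs — built by a first-order formula $\varphi_\adj(x,y,\bar a)$ out of $G$ itself together with the parameters — which consist of a large ``clique-like'' reservoir of vertices (available a.a.s.\ in $G(n,p)$ since $G(n,p)$ contains huge cliques is false, but it contains a graph that is robustly Hamiltonian, which is what matters) attached to small ``switch'' gadgets controlled by the adjacency of the coordinate vertices to the parameters. Concretely, for a target arithmetic sentence $\varphi$ one chooses a family of such edge-definitions so that $G\models Q_\Ham\,xy.\,\varphi^{(j)}_\adj(x,y,\bar a)$ holds iff the $j$-th bit of the value of some arithmetic subexpression on $\mathbb{N}|_m$ is $1$. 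Since $\Logic(Q_\Ham)$ is closed under Boolean combinations and $\exists$, one then assembles $\psi_\varphi$ by structural induction on $\varphi$: atomic arithmetic formulas $s<t$, $s+t=r$, $s\cdot t=r$ become finite Boolean combinations of oracle calls over all choices of coordinates, and $\exists$, $\wedge$, $\neg$ are handled directly by the logic. One must check that the auxiliary graphs are genuinely first-order definable in $G$ (so that $\varphi^{(j)}_\adj\in\Logic(Q_\Ham)$) and antireflexive/symmetric, as Definition~\ref{def: Lindstrom Q, Flum's variant} requires.

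The main obstacle — and the heart of the proof — is the \emph{self-reduction / padding} step: we have no relativization quantifier, so the graph passed to $Q_\Ham$ has vertex set \emph{all} of $V$, and its edge relation is a first-order formula evaluated in the \emph{random} graph $G(n,p)$. So the gadget we want to hide must coexist with $n-O(m)$ ``junk'' vertices whose adjacencies in the auxiliary graph we only control probabilistically. The fix is to make the auxiliary-graph construction \emph{robust}: arrange (using that $G(n,p)$ is a.a.s.\ a very dense, highly connected, pancyclic-like expander on any linear-sized subset) that the junk part is always Hamiltonian-connected in the auxiliary graph regardless of the random bits, so that Hamiltonicity of the whole auxiliary graph is equivalent to satisfiability of the small gadget; this is a Pósa-rotation / absorber argument, and getting it to go through \emph{simultaneously for all the parameter choices} while keeping the edge-definition first-order is the delicate part. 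A secondary obstacle is bookkeeping: verifying that the number of parameters needed stays $O(\log\log n)$ (so that the configuration of parameters with the right adjacency pattern exists a.a.s., by a first/second-moment computation) while still yielding $m=\Omega(\log\log\log n)$ addressable coordinates — i.e.\ matching the claimed $f$. Once the robust gadget and the parameter count are in hand, the arithmetization identity in Definition~\ref{def: arithmetization} follows: for each fixed arithmetic $\varphi$, the finitely many ``bad events'' (a parameter pattern missing, or the expander structure failing on some linear subset) each have probability $o(1)$, so $\Pr[G(n,p)\models\psi_\varphi \Leftrightarrow \mathbb{N}|_{f(n)}\models\varphi]\to 1$, completing the proof.
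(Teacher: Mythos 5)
Your proposal breaks down at its very first step, and the breakdown is exactly the obstacle the paper's proof is designed to get around. In the arithmetization definition, each arithmetic sentence $\varphi$ must be translated into a \emph{single, fixed} sentence $\psi_\varphi\in\Logic(Q_\Ham)$, so the number of parameters (quantified vertex variables) available to you is a constant independent of $n$; you cannot use ``$O(\log\log n)$ parameters'' whose number grows with $n$. With a constant number of parameters $\bar a$, the first-order-definable sets in $G(n,p)$ are a.a.s.\ just unions of the $|\bar a|+2^{|\bar a|}$ adjacency-pattern classes --- singletons and linear-sized blobs --- so your ``telescoping'' family of $\Omega(\log\log\log n)$ pairwise-distinguishable coordinate vertices, and any definable set of intermediate size, simply does not exist at the first-order level. (This is precisely why the first-order closure of \Conn\ and \Chr{k} obeys the zero-one law in Theorem \ref{thm: conn+chr obey 0-1}.) The paper escapes this by spending the Hamiltonicity quantifier not on evaluating arithmetic facts but on \emph{cardinality comparison}: $|A|\preceq|B|$ is expressed by asking $Q_\Ham$ about the graph consisting of all edges between $A\setminus B$ and $B\setminus A$ plus all edges inside $V\setminus(A\setminus B)$, which is Hamiltonian iff $|A|\leq|B|$, deterministically. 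This makes the same-degree set $D(v)$ definable with \emph{one} parameter, and choosing the degree $m$ as in Equation (\ref{eq: definition of m and h}) tunes $|D(v)|$ to $\Theta(\log\log n)$; then a.a.s.\ every subset of such a small set is the trace $N(w,D(v))$ of some vertex $w$, the induced graph on $D(v)$ is pseudorandom, and inside it one finds a definable $B=N(u,D(v))$ of size $\Theta(\log\log\log n)$ with full internal powerset representation --- which yields second-order logic, and hence arithmetic, on $B$ in the Kaufmann--Shelah style.

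Your second step has an independent gap: the appeal to NP-completeness of Hamiltonicity is a red herring, since NP-hardness gives polynomial-time reductions, not reductions realizable as a fixed antireflexive symmetric formula $\varphi_\adj(x,y,\bar a)$ with constantly many parameters evaluated inside the random graph; and the ``robustly Hamiltonian junk plus switch gadgets'' construction (P\'osa rotations, absorbers, simultaneously over all parameter choices) is asserted rather than carried out, and it is unclear how order, addition and multiplication on coordinates you cannot even define would be encoded this way. In the paper's route none of this machinery is needed: the only Hamiltonicity queries are the cardinality-comparison graphs above, whose Hamiltonicity is a trivial deterministic fact, and all the probabilistic work goes into degree statistics (Poisson approximation for $|D(m)|$) and powerset-representation lemmas.
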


\begin{remark} {\ }
\begin{enumerate}
\item The interpretation mentioned in Theorem \ref{thm: L(Q_ham) has
arithmetic} is theoretically explicitly given.
\item No effort was made on getting the best behavior for $f$, and
clearly $f = \Omega(\log \log \log n)$ is far from optimal. In fact,
with some effort one can show that the pair $(\Logic(Q_\Ham),
G(n,p))$ can interpret arithmetic for some \emph{linear} function.
\item Having arithmetization means that $\Logic(Q_\Ham)$ violates even the
modular limit law of Kolaitis and Kopparty.
\end{enumerate}
\end{remark}

The following corollary is an immediate consequence of Theorem
\ref{thm: L(Q_ham) has arithmetic} and the fact that
$\Logic(Q_\Ham)$ is the inclusion minimal semiregular language in
which Hamiltonicity is expressible.

\begin{corollary}
Let $\Logic$ be a language that can express Hamiltonicity and let
$0<p<1$ be constant, then the pair $(\Logic, G(n,p))$ has
arithmetization.
\end{corollary}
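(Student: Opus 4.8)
The plan is to deduce the corollary directly from Theorem \ref{thm: L(Q_ham) has arithmetic} together with the inclusion-minimality of $\Logic(Q_\Ham)$ among semiregular languages, without reworking any of the arithmetization machinery. All of the probabilistic and combinatorial content already lives in the pair $(\Logic(Q_\Ham), G(n,p))$; what remains is a purely formal transfer of that content along an expressive-power inclusion.

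First I would fix the language $\Logic$ of the hypothesis. Since the paper works throughout with semiregular (and regular) languages, and the relevant minimality result is stated for semiregular languages, I read the hypothesis as: $\Logic$ is semiregular and contains a sentence defining $\Ham$. By the basic Lindstr\"{o}m result quoted above --- that $\Logic(Q_\Ham)$ is the inclusion minimal semiregular language able to express $\Ham$ --- every property expressible in $\Logic(Q_\Ham)$ is also expressible in $\Logic$. Concretely, for each sentence $\psi \in \Logic(Q_\Ham)$ there is a sentence $\psi^\ast \in \Logic$ with $G \models \psi \Leftrightarrow G \models \psi^\ast$ for every graph $G$. Informally, $\Logic$ simulates each occurrence of $Q_\Ham$ by using weak substitution to install the defined edge relation and then applying its own Hamiltonicity sentence to the resulting graph.

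Then I would invoke Theorem \ref{thm: L(Q_ham) has arithmetic} to obtain a function $f = \Omega(\log\log\log n)$ such that for every arithmetic sentence $\varphi$ there is $\psi_\varphi \in \Logic(Q_\Ham)$ with
\[ \lim_{n\to\infty} \Pr\left[ G(n,p) \models \psi_\varphi \Longleftrightarrow \mathbb{N}|_{f(n)} \models \varphi \right] = 1 . \]
Applying the translation from the previous paragraph to each $\psi_\varphi$ yields $\psi_\varphi^\ast \in \Logic$ agreeing with $\psi_\varphi$ on every graph, hence in particular on every graph in the support of $G(n,p)$. The two events $[G(n,p) \models \psi_\varphi \Leftrightarrow \mathbb{N}|_{f(n)} \models \varphi]$ and $[G(n,p) \models \psi_\varphi^\ast \Leftrightarrow \mathbb{N}|_{f(n)} \models \varphi]$ are therefore \emph{literally the same event}, so the displayed limit is unchanged with $\psi_\varphi^\ast$ in place of $\psi_\varphi$. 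Thus $(\Logic, G(n,p))$ interprets arithmetic for the same $f$, and in particular has arithmetization, as claimed.

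The only step I would spell out with care is the translation $\psi \mapsto \psi^\ast$: one must confirm that minimality of $\Logic(Q_\Ham)$ delivers genuine \emph{pointwise} (semantic) equivalence of sentences on all finite graphs, not merely that both languages happen to define the single property $\Ham$. This pointwise agreement is exactly what the Lindstr\"{o}m minimality result provides, so no new combinatorial argument is needed; the reason it matters is that it makes the relevant probabilistic events coincide exactly, so the limit transfers verbatim rather than only approximately. I do not expect any genuine obstacle here --- the work is all in the theorem, and the corollary is a translation.
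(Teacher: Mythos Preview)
Your proposal is correct and follows exactly the paper's own approach: the paper states this corollary as an immediate consequence of Theorem~\ref{thm: L(Q_ham) has arithmetic} together with the fact that $\Logic(Q_\Ham)$ is the inclusion-minimal semiregular language expressing Hamiltonicity, and you have simply spelled out this transfer argument in detail. Your careful reading of the hypothesis as assuming $\Logic$ is semiregular matches the paper's intent.
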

This answers the question of Blass and Harary
\cite{A:blass_&_harary1979}.

\subsection{Discussion}
This study suggests a notion of simplicity for properties of graphs.
Properties for which the \mbox{first order} closure obeys the
zero-one law are simpler than properties $P$ for which $\Logic(Q_P)$
has nonconverging sentences, or worse, are able to interpret
arithmetic. There is an intermediate level of properties for which
$\Logic(Q_P)$ does not obey the zero-one law but every sentence has
a limiting probability. Studying this question for other properties
of graphs seems natural and interesting. Obtaining general criteria
for graph properties to obey or fail the zero-one law seems within
grasp.

The same question arises also for other situations in which the
zero-one law holds. In particular we are interest in the behavior of
the same languages when the random graph is $G(n,p = n^{-\alpha})$
and $0<\alpha<1$ is irrational. As mentioned above, \FO\ obeys the
zero-one law in this case \cite{A:shelah_&_spencer88}. The partial
results that we have \cite{A:haber_&_shelahPPb} suggests that at
least $\Logic(Q_\Ham)$ behave differently at this situation, but
this is still a work in progress.

Looking at the proof of Theorem \ref{thm: conn+chr obey 0-1} one
sees that while the language has strictly stronger expressive power,
it defines the same family of definable sets as \FO. We were also
interested in the question of finding a language with more definable
sets than \FO, that still obeys the zero-one laws. Lately a rather
natural language having these properties was found
(\cite{A:haber_&_shelahPPa}, paper in preparation).

\subsection{Notation}
Along the paper we use standard graph theory notions and notation.

Let $G=(V,E)$ be a graph. We denote the neighborhood of a vertex $v$
by $N(v) = \{u\in V | u \adj v\}$. We denote the degree of $v$ by
$d(v) = |N(v)|$. Given a vertex set $S \subset V$ we denote the
neighborhood of $v$ in $S$ by $N(v,S) = N(v) \cap S$ and the degree
of $v$ in $S$ by $d(v,S) = |N(v,S)|$. Given two vertices $u,V$ their
\emph{codegree} is the number of common vertices, and is denoted
here by $\codeg(u,v)$. We refer the reader to \cite{B:west2000},
\cite{B:bollobas02} or \cite{B:diestel2010} for general graph theory
monographs.

Let $\Logic$ be a formal language for the class of graphs and let
$G=(V,E)$ be a graph. A vertex set $A\subset V$ is said to be
\emph{definable in $\Logic$} (also \emph{$\Logic$-definable}), if
there is a formula $\varphi(x) \in \Logic$ with $x$ being a free
variable such that $A = \{ a \mid G \models \varphi(a) \}$.

We use the standard ``Big O'' asymptotic notation of Bachmann and
Landau. Let $f(n),g(n)$ be two positive functions whose domain is
$\omega = \{0,1,2,\dotsc\}$. We say that $f=O(g)$ if there is a
constant $C$ such that $f(n) \leq Cg(n)$ for every integer $n$. We
say that $f=\Omega(g)$ if $g=O(f)$, and that $f=\Theta(g)$ if both
$f=O(g)$ and $f=\Omega(g)$. If $f/g \rightarrow 0$ as
$n\rightarrow\infty$ we write $f=o(g)$, if $f=o(g)$ then we also
write $g=\omega(f)$. In particular, we may use $f=o(1)$ and
$f=\omega(1)$ to denote functions tending to zero and to infinity
respectively. We also write $f=(1\pm\epsilon)g$ if there exists a
constant $N$ such that $(1-\epsilon)g(n) \leq f(n) \leq
(1+\epsilon)g(n)$ for every $n>N$. We add a diacritical tilde above
the relation symbol to denote asymptotic relation which holds up to
polylogarithmic factors. That is, $f = \tilde{O}(G)$ means that $f =
O(g \cdot h)$ where $h$ is some polylogarithmic function. In the
case of the strict relations the tilde means ``by more than a
polylogarithmic function''. For example, $f = \tilde{\omega}(g)$
means that $f/gh \rightarrow \infty$ for \emph{any} polylogarithmic
$h$.

We use $\ln$ for base $e=2.718281828\dotsc$ logarithms and $\log$
for base two logarithms.

\section{Connectivity and chromatic number}
In this section we prove Theorem \ref{thm: conn+chr obey 0-1}.
\begin{proof}[Proof of Theorem \ref{thm: conn+chr obey 0-1}]
Let $p$ be a constant, $0 < p < 1$. Assume first that $\varphi$ is a
sentence in the language $\Logic =
\Logic(Q_\Conn,Q_{\Chr{2}},Q_{\Chr{3}},\dotsc)$, which is the
first-order closure of the quantifiers
$Q_\Conn,Q_{\Chr{2}},Q_{\Chr{3}},\dotsc$ (using the weakest sense of
a Lindstr\"om quantifier as in Definition \ref{def: Lindstrom Q,
Flum's variant}). We wish to show that the limiting probability of
$\varphi$ in $G(n,p)$ is either zero or one. We use induction on the
structure of $\varphi$.

{\bf Claim \ref{thm: conn+chr obey 0-1}.1:} Let $\varphi(\bar{a})$
be a formula in $\Logic$, where $\bar{a}$ are free variables (seen
as parameters of $\varphi$). Then the limiting probability of
$\varphi(\bar{a})$ in $G(n,p)$ with $0<p<1$ is either zero or one,
depending only on the relations among $\bar{a}$.

{\bf Proof of Claim \ref{thm: conn+chr obey 0-1}.1:} By induction on
the structure of the formula. For an atomic formula it is clear
(that is, the truth value of $a_1=a_2$ and $a_1 \sim a_2$ depends
only on the relations among $a_1$ and $a_2$...).

If $\varphi$ is of the form $\neg \psi$ or $\psi_1 \wedge \psi_2$
then the claim is immediate.

Assume that $\varphi(\bar{a})$ is of the form $\varphi(\bar{a}) = Q
\bar{y}. \psi(\bar{a},\bar{y})$ where $Q$ is one of $\exists,
Q_\Conn, Q_{\Chr{k}}$ and $\bar{y}$ is of length one or two.
Consider the equivalence relation $E_{\bar{a}}$ over vertices
defined by
\[ x E_{\bar{a}} y \Longleftrightarrow \forall a \in \bar{a}.\, \left[ (x = a
\leftrightarrow y = a) \wedge (x \sim a \leftrightarrow y \sim a)
\right] ,
\]
and let $B_i, 1 \leq i \leq k_* = |\bar{a}| + 2^{|\bar{a}|}$ be the
equivalence classes in $[n] / E_{\bar{a}}$. By the induction
hypothesis, the limiting probability of $\psi(\bar{a},\bar{y})$
depends only on the relations among $\bar{a}$ and $\bar{y}$. In
other words, if $Q = \exists$ then the aforementioned limiting
probability is determined by the relations among $\bar{a}$ and the
$i$ for which $b \in B_i$. If $Q = Q_\Conn$ or $Q = Q_{\Chr{k}}$
then the limiting probability depends on relations among $\bar{a}$,
the equivalence classes containing the two variables of $\bar{y}$
and the relations between these variables.

Now, for the first time, probability enters the proof. Each of the
singletons $\{a_i\}$ is an equivalence class. Let $B_i$ be an
equivalence class defined by some adjacency pattern between a vertex
and $\bar{a}$, and let $t=t(i)$ be the number of adjacencies in the
pattern. The probability that a given vertex belongs to $B_i$ is
then $p_* = p_*(i) \defeq p^t(1-p)^{|\bar{a}| - t }$. Clearly and
importantly, the event ``$v$ belongs to $B_i$'' is independent of
all other events of the form ``$u$ belongs to $B_j$''. Hence the
size of the equivalence class follows a binomial distribution with
parameters $n-|\bar{a}|$ and $p_*$, and in particular a.a.s.\ all
equivalence classes are of size which is tightly concentrated around
its linear sized ($p_* (n-|\bar{a}|)$) mean.

Assume first that $Q$ is the existential quantifier. If for any of
the (finitely many) $B_i$'s the limiting probability of
$\psi(\bar{a},y)$ is one for $y \in B_i$, then $\varphi$ also has
limiting probability one. Otherwise $\varphi$'s limiting probability
is zero.

Next assume that $\varphi(\bar{a})$ is of the form $\varphi(\bar{a})
= Q_\textsc{Conn} y_1,y_2. \psi(\bar{a},y_1,y_2)$. We want to show
that the limiting probability for the connectivity of the graph $F =
([n], \{\{y_1,y_2\} \mid \psi(\bar{a},y_1,y_2)\} )$ depends solely
on the relations between the members of $\bar{a}$ and themselves. If
$\bar{a}$ is the empty sequence, then $F$ is either the empty graph
over $[n]$ (which is disconnected), the complete graph over $[n]$
(connected), the original graph or its compliment. In the last two
cases, since the original graph is sampled from $G(n,p)$, it is
a.a.s.\ connected (this is a well known fact. See, e.g.,
\cite{B:janson_&_luczak_&_rucinski00}). All in all, the limiting
probability is either zero or one as required.

Assume now that $\bar{a}$ is not empty. Notice that by the induction
hypothesis, once $\bar{a}$ and the relations among its members are
given, the limiting probability of $\psi(\bar{a},y_1,y_2)$ depends
only upon the classes of $[n] / E_{\bar{a}}$ containing $y_1$ and
$y_2$, whether $y_1 \sim_{G(n,p)} y_2$ and whether $y_1 = y_2$. In
particular, if both $y_1$ and $y_2$ are not singleton, then we are
in one of four situations. It may be that for every $y_1 \in B_1,
y_2 \in B_2$ the formula $\psi(\bar{a},y_1,y_2)$ is true (for given
$\bar{a}$), or that for every such pair the formula
$\psi(\bar{a},y_1,y_2)$ is false, or that the truth value of
$\psi(\bar{a},y_1,y_2)$ depends \emph{solely} on the truth value of
$y_1 \sim_{G(n,p)} y_2$ (the last case counts as two, either
$\psi(\bar{a},y_1,y_2)$ agrees with $y_1 \sim_{G(n,p)} y_2$ or they
disagree).

Consider the graph $H$ whose vertex set is the quotient set $[n] /
E_{\bar{a}}$ and its edge set is determined by
\[ B_1 \sim_H B_2 \Longleftrightarrow \exists y_1 \in B_1, y_2 \in B_2
.\, \psi(\bar{a},y_1,y_2) . \] %
Notice that we allow loops in $H$. We argue that if $H$ is
disconnected then so is $F$, and if $H$ is connected then $F$ is
connected with probability tending to one with $n$. Let $u$ and $w$
be two vertices of $F$ and assume that there is a path
$v_0=u,v_1,\dotsc,v_l = w$ in $F$ connecting $u$ and $w$. Then
$[v_0], [v_1], \dotsc, [v_l]$ is a walk connecting $u$ and $w$ in
$H$ (formally this may not be a walk since it may be that $[v_i] =
[v_j]$ without a loop. Still it contains a path connecting $[v]$ and
$[w]$). Hence, if $H$ is disconnected then $F$ is disconnected as
well.

Assume now that $H$ is connected and let $u,w$ be two vertices in
$F$. For the moment, assume also that $[u] \neq [w]$. Denote the
vertices of the shortest path connecting $[u]$ and $[w]$ in $H$ by
$[v_0=u],[v_1],\dotsc,[v_l = w]$. We find a path connecting $u$ and
$w$ in $F$ by induction on the length of the path as follows. Start
by denoting $u$ as $v'_0$ and assume that we have a path in $F$
connecting $u$ to $v'_i$ where $v'_j \in [v_j]$ for every $0 \leq j
\leq i$. Since $[v_i] \sim_H [v_{i+1}]$, we know that there are two
vertices $y_1 \in [v_i]$ and $y_2 \in [v_{i+1}]$ such that
$\psi(\bar{a}, y_1, y_2)$ holds. If either $[v_i]$ or $[v_{i+1}]$ is
a singleton (or both), then, by definition of $H$'s vertices, all
the vertices of $[v_i]$ relate in the same manner to all the
vertices of $[v_{i+1}]$. Hence we can take $v'_{i+1} = v_{i+1}$. If
both $[v_i]$ and $[v_{i+1}]$ are not singletons, then, as argues
above, both are of size linear in $n$. If the value of
$\psi(\bar{a},y_1,y_2)$ does not depend on $y_1 \sim_{G(n,p)} y_2$,
we can simply pick $v'_{i+1} = v_{i+1}$. Otherwise we use the fact
that the probability of $v'_i$ not having a neighbor or a
non-neighbor in a vertex set of size $cn$ is exactly $p^{cn} +
(1-p)^{cn} \leq c'^n$ where $0 < c' < 1$ is some constant. Hence,
with probability tending to one exponentially fast we can find a
vertex $v'_{i+1} \in [v_{i+1}]$ satisfying
$\psi(\bar{a},v'_{i},v'_{i+1})$. When picking $v'_{l-1}$ we need to
take extra care and make sure that it is a neighbor of $w$ in $F$ as
well. This is easily done using a similar consideration (and similar
computations).

Assume now that $[u] = [w]$. In this case we consider a closed walk
instead of the shortest path. That is, we denote $[v_0=u],[v_1],[v_2
= w]$, where $[v_1] \neq [u]$. Such $[v_1]$ exists since we are in
the case of nonempty $\bar{a}$, meaning that the number of vertices
in the connected graph $H$ is greater than one. The rest of the
proof is identical to the above.

We have shown that with probability tending exponentially fast to
one, the connectivity of $F$ depends only on the connectivity of
$H$. By the definition of $H$, it is connected depending only on
$\bar{a}$.

Finally let $Q = \Chr{k}$ for some constant $k \in \mathbb{N}$. We
look again on the graphs $F$ and $H$ defined above. By arguments
similar to the above, there are $k_*$ vertices $v_1,\dotsc,v_{k_*}
\in F$ such that $v_i \in B_i$ and $v_i \sim_F v_j \Leftrightarrow
B_i \sim_H B_j$. Hence $F$ has a copy of $H$ as a subgraph and
therefore $\chi(F) \geq \chi(H)$. We argue that with probability
tending to one there are two possibilities: either $\chi(F)
=\chi(H)$ or $\chi(F) = \Omega(n / \ln n)$. If for every $i$ the
vertices of $B_i$ form an independent set in $F$, then we can color
$F$ by assigning the color of $B_i$ to all its vertices. Hence in
this case $\chi(F) =\chi(H)$. Assume now that there is a set $B_i$
that is not an independent set and let $n' = |B_i|$ and recall that
a.a.s.\ $n' = cn(1+o(1)))$. The graph spanned by $F$ on $B_i$ is
either a clique, a spanned subgraph of $G(n,p)$ or the complement of
a spanned subgraph of $G(n,p)$. In the first case the chromatic
number of the spanned graph is $n'$, so $\chi(F) = \omega(n/\log
n)$. In the second and third cases the chromatic number of $F[B_i]$
is in fact the chromatic number of a random graph with $n'$ vertices
and edge probability $p'$ being equal to $p$ or to $1-p$. In an
exciting paper \cite{A:bollobas1988b} Bollob\'{a}s showed that with
probability tending to one this number is $(1/2 + o(1)) n'/\log_b
n'$ where $b = 1/(1-p')$. Thus in this case the chromatic number of
$F$ is of order $n / \ln n$.

Summarizing the above argument we get that the chromatic number of
$([n], \{\{y_1,y_2\} \mid \psi(\bar{a},y_1,y_2)\} )$ depends only on
the relations among $\bar{a}$ and it is either a specific constant
or growing to infinity with $n$. Therefore the limiting probability
of $Q_{\Chr{k}} xy .\, \psi(\bar{a},x,y)$ is either zero or one,
depending only on the relations among $\bar{a}$.
\newline{\bf End of proof of Claim {\ref{thm: conn+chr obey 0-1}.1}}.

The theorem for the case of formulas in $\Logic$ follows immediately
as $\varphi$ is a sentence in $\Logic$ with no free variables.

We argue now that the proof works, mutatis mutandis, also for
$\varphi \in \Logic^{\mathrm{tu}} =
\Logic^{\mathrm{tu}}(Q_\Conn,Q_{\Chr{2}},Q_{\Chr{3}},\dotsc)$, where
$\Logic^{\mathrm{tu}}(Q_\Conn,Q_{\Chr{2}},Q_{\Chr{3}},\dotsc)$ is
the first order closure of $Q_\Conn,Q_{\Chr{2}},Q_{\Chr{3}},\dotsc$
with the syntax and semantics as in Definition \ref{def: Lindstrom,
tuple sense}. The proof still works by quantifier elimination, and
the key observation is that we can still partition the vertices into
equivalence classes that are either singletons or sets of polynomial
size. We restrict ourselves to a description of the changes in the
induction step in Claim \ref{thm: conn+chr obey 0-1}.1. Assume that
$\varphi$ is of the form $\varphi = Q_K
\bar{v}\bar{u}\bar{w}\bar{x}\bar{y}. \psi(\bar{v},\bar{a}),
\varphi_=(\bar{u},\bar{w},\bar{a}),
\varphi_\adj(\bar{x},\bar{y},\bar{a})$, where $k$ is one of
$Q_\Conn,Q_{\Chr{2}},Q_{\Chr{3}},\dotsc$, and $\psi, \varphi_=$ and
$\varphi_\adj$ are as in Definition \ref{def: Lindstrom, tuple
sense}. Let $l = |\bar{v}|$, and notice the number $l$-tuples of
vertices is $n^l$. The relations between the $l$-tuples and the
elements of $\bar{a}$ will divide the set of $l$-tuples into
$(|\bar{a}| + 2^{|\bar{a}|})^l$ --- finitely many --- equivalence
classes, denoted as above by $B_i$. The size of an equivalence class
defined by $s$ equalities to elements of $\bar{a}$ (and thus
$|\bar{a}|-s$ inequalities --- the adjacencies does not matter here)
is $\Theta(n^{l-s})$. To see this, consider an equivalence class
$B_i$ defined by $s$ equalities to elements of $\bar{a}$. There are
$(n-|\bar{a}|)^{l-s}$ tuples\footnote{If some of the elements of
$\bar{a}$ are identical, we may have a few more candidate tuples.}
that satisfy the $s$ equalities. Call the variables that are not
required to be equal to any of the elements of $\bar{a}$ an
inequality variable. Then the number of $l$-tuples that additionally
satisfy all the adjacency relations for the first inequality
variable is a Binomial random variable with parameters
$(n-|\bar{a}|)^{l-s}$ and $p^*$, where $p^*$ is a constant of the
form $p^r(1-p)^t$ ($r$ and $t$ are two constants satisfying $0 < r,t
< |\bar{a}|$). Hence a.a.s.\ the number of such $l$-tuples is of
order $n^{l-s}$. Applying the same argument to the rest of the
(finitely many) inequality variables, gives that $B_i$ is of size
$\Theta(n^{l-s})$.

Now, $\psi$ defines the set of vertices, and it will be a union of
equivalence classes, hence it is either of constant size or of
polynomial size in $n$. The affect of $\varphi_=$ is similar.
Indeed, $\varphi_=(\bar{x}, \bar{y})$ depends on the classes $B_i,
B_j$ that contain $\bar{x}$ and $\bar{y}$, but also on the relations
between $\bar{x}$ and $\bar{y}$. Still, there are only finitely many
options for such relations, and an argument identical to the
argument above gives that the number of $l$-tuples in each vertex of
the defined graph remains either constant or polynomial in $n$. The
rest of the proof remains very similar.
\end{proof}

\section{Hamiltonicity}
In this section we prove Theorem \ref{thm: L(Q_ham) has arithmetic}
by showing that using properties of $G(n,p)$ we can encode any
\emph{second order} sentence into a sentence of $\Logic(Q_\Ham)$
such that the sentences are equivalent on a set of vertices with
size tending to infinity at a controlled pace as $n$ grows.

Let us start with a brief overview of the proof. Using the $Q_\Ham$
quantifier we shall be able to express $|A| \leq |B|$ where $A$ and
$B$ are (definable) vertex sets. Next we would like to find sets
that are: definable, small enough so that all their subsets are
definable, and finally also large enough --- tend to infinity as $n$
grows in an appropriate rate. Given the ability to express equality
of definable sets it seems plausible to look on sets of the form
``all vertices having degree $d$''. Still, it is not obvious how to
continue, as the set of all vertices of degree $np$ is too large,
while the set of vertices of, say, minimal degree is too small. We
shall define such a set having size $\Theta(\log \log n)$. This
enables us to interpret \emph{monadic} second order on this set,
which almost suffice considering the fact that the induced graph on
this set should be quite random and then an application of
\cite{A:Kaufmann_&_Shelah1985} should give arithmetization. We shall
prefer to give a direct proof that will require less randomness from
the induced graph, and it is therefore technically less involved.

\newcommand{\urep}{powerset representative}
We begin by defining two graph properties that are needed later for
the encoding scheme.
\begin{definition}
Let $\Logic$ be a language and let $\varphi(x,\bar{y})\in\Logic$ be
a formula in which $x$ is a free variable. We say that a given graph
$G=(V,E)$ \emph{allows $\varphi$-powerset representation}, if there
exists a vertex sequence $\bar{b} \in V^{|\bar{y}|}$ such that for
every subset $A \subset S
\defeq \{ x \in V \mid G \models \varphi(x,\bar{b})\}$ there is a vertex
$v\in V$ such that $N(v,S) = A$.
\end{definition}
If $G$ allows $\varphi$-powerset representation then we can
represent monadic second order variables on $S$ by vertices from
$V$. Generally, this is useful if $S$ is large enough. In order to
encode second order sentences we need a bit more:
\newcommand{\SOmim}{double powerset representative}
\comment { %
@ This is the definition that Saharon sent me in the scanned pages
from 18-July-2011 (or what I was able to get from it). @
\begin{definition} \label{def: SOmim}
Let $\Logic$ be a language of graphs. We say that a graph $G =
(V,E)$ \emph{allows $(\varphi_0(x,\bar{y}_0),
\varphi_1(x,\bar{y}_1), f_{l,0}, f_{l,1})$-double powerset
representation} when
\begin{enumerate}
\item $f_0, f_1 \colon \mathbb{N} \to \mathbb{N}$, and $\forall n.\,
f_0(n) \leq f_1(n) \leq n$.
\item \label{item: b_0}
There is a finite sequence $\bar{b}_0 \in V^{|\bar{y}_0|}$
such that $S_{\bar{b}_0} = \{a \mid G \models
\varphi_0(a,\bar{b}_0)\}$ is of size at least $f_0(|V|)$.
\item For every $\bar{b}'_0 \in V^{|\bar{y}_0|}$ one has $|\{a \mid G \models
\varphi_0(a,\bar{b}'_0)\}| \leq f_1(|V|)$.
\item Let $S = S_{\bar{b}_0}$ for $\bar{b}_0$ as in Item \ref{item: b_0}
above. Then
\begin{enumerate}
\item For every $A \subseteq S$ there is $v \in V$ such that $N(v,S)
= A$.
\item There is a finite sequence $\bar{b}_1 \in V^{|\bar{y}_1|}$
such that $B_{\bar{b}_1} = \{a \mid G \models
\varphi_0(a,\bar{b}_0)\}$ has at least $f_1(|S|)$ members.
\item If $\bar{b}_1 \in V^{|\bar{y}_1|}$ is such that
$B_{\bar{b}_1}$ is maximal, then for every $A \subseteq
B_{\bar{b}_1}$ there is a vertex $v \in S_{B_{\bar{b}_0}}$ such that
$N(v,S_{B_{\bar{b}_1}}) = A$.
\end{enumerate}
\end{enumerate}
\end{definition}
} %
\begin{definition} \label{def: SOmim}
Let $\Logic$ be a language of graphs. We say that a graph $G =
(V,E)$ \emph{allows $(\varphi_0(x,\bar{y}_0),
\varphi_1(x,\bar{y}_1), f)$-double powerset representation} when
\begin{enumerate}
\item $f \colon \mathbb{N} \to \mathbb{N}$ is a monotone increasing
function.
\item \label{item: b_0}
There exists a vertex sequence $\bar{b}_0 \in V^{|\bar{y}|}$ such
that for every subset $A \subset S_{\bar{b}_0}
\defeq \{ x \in V \mid G \models \varphi(x,\bar{b}_0)\}$ there is a vertex
$v\in V$ such that $N(v,S) = A$. That is, $G$ allows
$\varphi_0$-powerset representation.
\item Let $S = S_{\bar{b}_0}$ for $\bar{b}_0$ as in Item \ref{item: b_0}
above. Then
\begin{enumerate}
\item There is a finite vertex sequence $\bar{b}_1 \in V^{|\bar{y}_1|}$
such that the set $B_{\bar{b}_1} \defeq \{x \in S \mid G \models
\varphi_1(x,\bar{b}_1)\}$ has at least $f(|V|)$ members.
\item For every $A \subseteq
B_{\bar{b}_1}$ there is a vertex $v \in S$ such that $N(v,S) = A$.
\end{enumerate}
\end{enumerate}
\end{definition}
Notice that in the definition above every subset of $B\defeq
B_{\bar{b}_1}$ is represented as a neighborhood in $B$ of a vertex
from $S$. The set $B$ will be the set over which we shall be able to
encode second order sentences. Hence we are interested in a lower
bound for its size, which is the role of $f$.
\comment{ %
@ This is my original definition for \SOmim. It is not too good. @
\begin{definition} \label{def: (old) SOmim}
Let $\Logic$ be a language of graphs. We say that a graph $G =
(V,E)$ is a \emph{\SOmim} if there exists a set $S$
defineable\comment{\footnote{That is, there exists a formula
$\varphi(v) \in \Logic$ where $v$ is a free variable, such that $S =
\{v\in V | \varphi(v) \}$.}} in $\Logic$ such that
\begin{enumerate}
\item For every subset $A\subset S$ there exists a vertex $v\in V$
such that $N(v,S) = A$. (That is, $S$ is \urep\ with respect to
$G$).
\item There exists a set $B \subset S$ such that $B$ is definable in
$\Logic$ and for every subset $A \subset B$ there exists a vertex
$v\in S$ such that $N(v,B) = A$. (That is, $B$ is \urep\ with
respect to $G[S]$).
\end{enumerate}
\end{definition}
} %

The following lemma demonstrates the usefulness of the definition
above --- basically saying that when a graph $G$ on $n$ vertices
allows $(\varphi_0, \varphi_1, f)$-double powerset representation,
then we can encode arithmetic over the set $\{1,\dotsc,f(n)\}$. This
lemma uses ideas similar to the main ideas of
\cite{A:Kaufmann_&_Shelah1985}.
\begin{lemma} \label{lem: 1. G SOmim -> encode 2nd order}
Let $\Logic$ be a semiregular language and let $G=(V,E)$ be a graph.
Assume that $G$ allows $(\varphi_0, \varphi_1, f)$-double powerset
representation. Then there is a set $B$ of size at least $f(|V|)$
such that for every second order sentence $\varphi$ there is a
sentence $\varphi' \in \Logic$ satisfying
\[ G[B] \models \varphi \Longleftrightarrow G \models \varphi' . \]
\end{lemma}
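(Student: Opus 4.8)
The goal is to simulate second-order logic on the vertex set $B$ inside the graph $G$ using only the first-order closure of the edge relation, given that $G$ allows $(\varphi_0,\varphi_1,f)$-double powerset representation. The plan is to give a uniform translation $\varphi \mapsto \varphi'$ by structural induction on the second-order sentence $\varphi$, where the base object is the ``working set'' $B = B_{\bar b_1}$ obtained from the definition. First I would fix, once and for all, the parameter tuples $\bar b_0$ and $\bar b_1$ witnessing the double powerset representation; since $\Logic$ is semiregular it is closed under existential quantification, so these finitely many parameters can be quantified away at the very end, after the translation is built relative to them. Throughout, a first-order variable of $\varphi$ ranging over $B$ is translated to a $\Logic$-variable constrained (via $\varphi_0$ and $\varphi_1$) to lie in $B$; the relation $x \adj_{G[B]} y$ is just $x \adj y$ relativized to $B$, and equality is inherited. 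The one subtlety is that $\Logic$ need not be relativization-closed (it is only semiregular), so instead of literally passing to $G[B]$ I would carry the guard ``$x \in B$'' along explicitly on every quantifier, writing $\exists x.(x\in B \wedge \cdots)$ and $\forall x.(x\in B \rightarrow \cdots)$; this is definable since $B$ is defined by $\varphi_0(\cdot,\bar b_0)\wedge\varphi_1(\cdot,\bar b_1)$.

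**Encoding the monadic second-order quantifiers.** The heart of the argument is the treatment of set quantifiers $\exists X \subseteq B$. Here I would use clause~3(b) of the definition: every subset $A \subseteq B$ equals $N(v,S)$ for some $v \in S$. So a monadic second-order variable $X$ is represented by a vertex $v_X \in S$, and the membership atom ``$x \in X$'' (for $x$ a first-order variable already constrained to $B$) is translated to the first-order formula ``$x \adj v_X$'' (together with $v_X \in S$, which is $\varphi_0(v_X,\bar b_0)$). Quantification $\exists X \subseteq B.\,\theta$ becomes $\exists v_X.(\varphi_0(v_X,\bar b_0) \wedge \theta')$. Crucially, this is a faithful encoding: as $v_X$ ranges over $S$, the neighborhoods $N(v_X,S) \cap B$ range over \emph{all} subsets of $B$ — surjectivity comes from 3(b), and we only ever test membership of elements of $B$, so we never care which superset-in-$S$ structure $v_X$ actually realizes. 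This gives a sound and complete translation of MSO over $G[B]$ into $\Logic$.

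**From monadic to full second order.** To handle relation variables of arity $k \geq 2$ — which is what genuine second-order logic needs, and what the reference to \cite{A:Kaufmann_&_Shelah1985} hinges on — I would encode a $k$-ary relation $R \subseteq B^k$ by encoding its ``$k$-fold powerset'' using the \emph{other} layer of the representation. This is exactly why $S$ (not just $B$) carries a full powerset structure via $\bar b_0$: a $k$-ary relation on $B$ is a subset of $B^k$, which we code as a single subset of $S$ once we have a definable pairing (more precisely, a $k$-tupling) of elements of $B$ into elements of $S$. The plan is: since $|S| \geq$ (in fact we will have $|S|$ comfortably larger than $|B|^k$ in the eventual application, because $S$ is the ``first'' level and $B$ the ``second''), fix a first-order-definable injection-like coding of $k$-tuples from $B$ into $S$ — concretely, use the powerset structure of $S$ over itself to let a vertex of $V$ represent, via its $S$-neighborhood, a subset of $S$ coding the graph of $R$. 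Then ``$R(x_1,\dots,x_k)$'' becomes a first-order formula asserting that the code of $(x_1,\dots,x_k)$ lies in the neighborhood set representing $R$. Set quantifiers over $k$-ary relations then reduce, as above, to first-order quantifiers over the representing vertices.

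**Main obstacle.** The routine parts — Booleans, first-order quantifiers, the monadic case — are bookkeeping. The real obstacle is making the $k$-ary encoding fully rigorous while using \emph{only} semiregularity (closure under $\neg,\wedge,\exists$ and weak substitution), and in particular defining the tupling function $B^k \hookrightarrow S$ so that it is (i) first-order definable from $\bar b_0,\bar b_1$ and (ii) genuinely injective on the relevant domain, so that distinct tuples get distinct codes and the translation is faithful in both directions. One must check that the powerset-representation property of $S$ is strong enough to build this: the clean way is to code a $k$-ary relation not by a subset of $S$ directly but iteratively, using that every subset of $S$ is a neighborhood $N(u,V)$ for some $u$ (clause~2 of the definition gives powerset representation of $S$ inside all of $V$), so nested neighborhoods realize nested powersets, and $k$ levels of nesting encode $k$-ary relations. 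Verifying that the composed translation is equivalence-preserving, i.e. $G[B] \models \varphi \iff G \models \varphi'$, is then a straightforward induction on $\varphi$, with the set-quantifier cases using surjectivity of the neighborhood maps in one direction and the guard formulas in the other. Finally, $|B| \geq f(|V|)$ is immediate from clause~3(a).
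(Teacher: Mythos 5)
Your setup, your treatment of first-order variables relativized to $B$, and your monadic case (representing $X\subseteq B$ by a vertex $v_X\in S$ and translating $x\in X$ as $x\adj v_X$, with surjectivity from clause~3(b)) coincide with the paper's proof. The genuine gap is exactly where you locate the ``main obstacle'': the encoding of relation variables of arity $k\geq 2$. Your plan relies on structure the hypothesis does not supply. You speak of ``the powerset structure of $S$ over itself'' and of ``$k$ levels of nesting'' of neighborhoods, but the definition gives only \emph{two} levels: subsets of $B$ are $B$-neighborhoods of vertices of $S$ (clause~3(b)), and subsets of $S$ are $S$-neighborhoods of vertices of $V$ (clause~2). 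There is no representation of subsets of $S$ by vertices of $S$, and no third level above $V$, so iterated nesting is unavailable. Likewise, a ``first-order-definable injection of $k$-tuples from $B$ into $S$'' cannot be conjured from the edge relation alone: the only canonical map the representation gives you sends a vertex $v\in S$ to the \emph{unordered} set $N(v,B)$, which cannot distinguish $(a,b)$ from $(b,a)$. (Your cardinality remark that $|S|$ exceeds $|B|^k$ ``in the eventual application'' is also beside the point, since the lemma is a deterministic statement to be proved from the definition alone.)

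The missing idea in the paper is a definable \emph{linear order} on $B$: one first writes a formula $\predfont{Order}(v_<)$ asserting that the $B$-neighborhoods of the $S$-neighbors of $v_<$ form a chain under inclusion in which every element of $B$ separates two consecutive neighborhoods; such a $v_<$ exists precisely because initial segments of any order on $B$ are realized as $N(\cdot,B)$ of vertices of $S$ (clause~3(b)) and the resulting set of vertices of $S$ is realized as $N(v_<,S)$ for some $v_<\in V$ (clause~2). With the induced order $<_{v_<}$ in hand, a $k$-ary relation $R\subseteq B^k$ is coded using only the two available levels: each $k$-subset $\{x_1,\dotsc,x_k\}\subseteq B$ is the $B$-neighborhood of some $s_{\{x_1,\dotsc,x_k\}}\in S$, and for each of the $k!$ order types (permutations $\pi$) one takes a vertex $v_\pi\in V$ whose $S$-neighborhood is the set of those subset-representing vertices corresponding to tuples of $R$ with order type $\pi$; the atom $R(x_1,\dotsc,x_k)$ is then translated by a first-order formula that consults $<_{v_<}$ to determine $\pi$ and tests adjacency of $s_{\{x_1,\dotsc,x_k\}}$ to $v_\pi$. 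Without constructing this order gadget (or an equivalent device), your translation of non-monadic quantifiers is not actually defined, so the proposal as it stands does not prove the lemma.
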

\begin{proof}
Let $G = (V,E)$ be a graph that allows $(\varphi_0(x,\bar{y}_0),
\varphi_1(x,\bar{y}_1), f)$-double powerset representation and let
$\bar{b}_0$ and $\bar{b}_1$ be the vertex sequences for which
Definition \ref{def: SOmim} is realized. Denote by $S$ and $B$ the
sets $S_{\bar{b}_0}$ and $B_{\bar{b}_1}$ (respectively) appearing in
the definition. We start by defining a linear order over the
vertices of $B$. Fix some linear order on $B$ and mark the vertices
of $B$ according to that order by $1, 2, \dotsc, b$. Since $G$
allows $(\varphi_0, \varphi_1, f)$-double powerset representation,
for every $1 \leq i \leq b$ there is a vertex $v_i$ in $S$ such that
$N(v_i, B) = \{1, \dotsc, i\}$. Also, again by $G$ allowing a
$(\varphi_0, \varphi_1, f)$-double powerset representation, there is
a vertex $v_<$ such that $N(v_<, S) = \{v_1, \dotsc, v_b\}$. We
shall use vertices like $v_<$ to represent a linear order over $B$.

We want a vertex $v_< \in V$ satisfying the formula
$\predfont{Order}(v_<)$ defined and explained immediately:
\begin{align*}
\predfont{Order}(v_<) \defeq & [ \forall s,t \in S.\, (s \sim v_<
\wedge t \sim v_<) \rightarrow \\ %
& ((N(s,B) \subsetneq N(t,B)) \vee (N(t,B) \subsetneq N(s,B)) ) ]
\wedge \\ %
& \forall b \in B.\, \exists s,t \in N.\, [(v_<). N(t,B) \triangle
N(s,B) = \{b\} ] .
\end{align*}
That is, for $\predfont{Order}(v_<)$ to hold for some vertex $v_<
\in V$ the following must happen:
\begin{enumerate}
\item Ordering the neighbors of $v_<$ in $S$ according to inclusion
of their neighborhoods in $B$ gives a linear order.
\item The number of elements in that order ($d(v_<,S)$) is at least
$|B|$ as any element of $B$ separates between the neighborhoods of a
pair of $v_<$ neighbors. (Also $d(v_<,S) \leq |B|+1$).
\end{enumerate}
Clearly $v_<$ induces a linear ordering over the elements of $B$.
Let us give a concrete definition. Assume that $v_< \in V$ satisfies
$\predfont{Order}(v_<)$. We define
\[ b_1 <_{v_<} b_2 \defeq \forall s \in S.\, [((s \sim v_<) \wedge (s
\sim b_2)) \rightarrow (s \sim b_1)] .\] %
It is not difficult to observe that $<_{v_<}$ is a linear order over
$B$. Again, since $G$ allows $(\varphi_0, \varphi_1, f)$-double
powerset representation we can find such $v_<$ for every linear
order $<$ over $B$.

Having order we can describe the encoding of second order sentences.
Let $\varphi$ be a second order sentence. We want to write a
sentence $\varphi' \in \Logic$ such that $G[B] \models \varphi
\Leftrightarrow G \models \varphi'$.\comment{Assume for the moment
that all the second order variables in $\varphi$ are either unary or
binary.} Monadic variables can be replaced using vertices from $S$
by replacing, e.g. $\exists A^1.\,\psi(A)$ by $\exists v_A\in
S.\,\psi'(v_A)$ where $\psi'$ is the sentence we get by replacing
every occurrence of $x \in A$ in $\psi$ by $x \sim v_A$.

For binary variables we need a bit more. Since $G$ allows
$(\varphi_0, \varphi_1, f)$-double powerset representation, for
every pair of vertices $x,y \in B$ we have a vertex $s_{\{x,y\}} \in
S$ such that $N(s_{\{x,y\}},B) = \{x,y\}$. Every binary relation $R$
can be represented by two sets of pairs
--- the set of all the pairs $x,y$ such that $x \leq_{v_<} y$ and
$xRy$ and the set of all the pairs $x,y$ such that $x >_{v_<} y$ and
$xRy$. We represent binary relations using this idea. That is, we
replace every occurrence of the form $\exists R^2.\,\psi(R)$
appearing in $\varphi$ by $\exists v_{R,\{1,2\}}, v_{R,\{2,1\}}.\,
\psi'(v_{R,\{1,2\}}, v_{R,\{2,1\}})$. In order to get
$\psi'(v_{R,\{1,2\}}, v_{R,\{2,1\}})$ from $\psi(R)$ we replace
occurrences of the form $xRy$ in $\psi$ by
\begin{align*}
& ((x \leq_{v_<} y) \wedge (\overline{\exists} s_{\{x,y\}}.\,
[s_{\{x,y\}} \adj
v_{R,\{1,2\}}])) \vee \\
& ((x >_{v_<} y) \wedge (\overline{\exists} s_{\{x,y\}}.\, [
s_{\{x,y\}} \sim v_{R,\{2,1\}}])) , %
\end{align*}
where we use $\overline{\exists} s_{\{x,y\}}.\,
\varphi(s_{\{x,y\}})$ as an abbreviation standing for $\exists
s_{\{x,y\}}.\, [s_{\{x,y\}} \in S \wedge N(s_{\{x,y\}}, B) =
\{x,y\}]$.

Generally, we represent $k$-ary relation variables in a similar
manner. That is, letting $R^k$ be a $k$-ary relation, we think of
$R$ as a family of $k!$ collections of sets of $k$ vertices from
$B$. For each such $k$-set $\{x_1,\dotsc,x_k\}$ there is a vertex
$s_{\{x_1,\dotsc,x_k\}}$ in $S$ such that $N(s_{\{x_1,\dotsc,x_k\}},
B) = \{x_1,\dotsc,x_k\}$ (by $G$ allowing a
$(\varphi_0(x,\bar{y}_0), \varphi_1(x,\bar{y}_1), f)$-double
powerset representation). Now consider a $k$-tuple $\bar{x} =
(x_1,\dotsc,x_k) \in R$, and let $\pi = \pi(\bar{x})$ be the
permutation such that $x_{\pi(1)} \leq x_{\pi(2)} \leq \dotsb \leq
x_{\pi(k)}$ where in case of equality $\pi$ maintains the original
order (that is, if $x_i = x_j$ and $i<j$ then $\pi(i) < \pi(j)$).
Let $X_\pi$ be the collection of all $k$-tuples $\bar{x}\in R$ such
that $\pi(\bar{x}) = \pi$. We represent $X_\pi$ by a vertex $v_\pi
\in V$ with the property that $N(v_\pi,S) = \{
v_{\{x_1,\dotsc,x_k\}} \mid (x_{\pi^{-1}(1)}, x_{\pi^{-1}(2)},
\dotsc, x_{\pi^{-1}(k)}) \in R \}$ (such a vertex exists by our
assumption on $G$). Finally, $R$ is represented by the family
$\{v_\pi\}$ where $\pi$ ranges over all the permutations of
$\{1,2,\dotsc,k\}$. This completes the proof.
\end{proof}

Next we aim to show that a.a.s.\ $G(n,p)$ allows a
$(\varphi_0(x,\bar{y}_0), \varphi_1(x,\bar{y}_1), f)$-double
powerset representation with respect to two formulas in $Q_\Ham$
with $f$ growing to infinity quickly enough. The first formula
defines a set of vertices all having the same degree. That is,
$\varphi_0(x,v)$ will simply express the fact that $d(x) = d(v)$
(the details of writing $\varphi_0$ in $\Logic(Q_\Ham)$ are given in
the beginning of the proof of Theorem \ref{thm: L(Q_ham) has
arithmetic} in Page \pageref{proof: L(Q_ham) has arithmetic}). Thus,
for an integer $m$ we define $D(m) = \{v\in V \mid d(v) = m\}$.
Somewhat abusing notation we also use $D(v)$ (for a given vertex
$v$) for the set of all vertices with the same degree as $v$, that
is, $D(v) = D(d(v))$. The second formula we shall use is even
simpler. $\varphi_1$ will define a neighborhood of a vertex. That
is, $\varphi_1(x,u) \defeq x\adj u$. Lemma \ref{lem: 2. G(n,p) is
SOmim w.r.t. D(m) and N(u,D(m))} shows that a.a.s.\ there are
two vertices $v,u$ such that $G(n,p)$ allows a $(\varphi_0(x,v),
\varphi_1(x,u), f)$-double powerset representation with $f =
\Theta(\log \log \log n)$.

The main idea of the proof of Lemma \ref{lem: 2. G(n,p) is SOmim
w.r.t. D(m) and N(u,D(m))} is to show that $D(m)$ (with $m$ properly
defined) is pseudorandom, in a sense to be given soon. Then we shall
use this pseudorandomness to show that there is a subset fitting to
the requirement of Definition \ref{def: SOmim} (and every such
subset is definable by $\varphi_1(x,u)$ for some vertex $u$).
\comment{Now, $D(m)$ is not necessarily definable, but we may take
the \emph{maximal} set (by cardinality) for which the arithmetic
interpretation holds. This will give a definable set, with a
controlled cardinality, on which we have arithmetic.}

We begin by picking a suitable degree. Let
\begin{equation} \label{eq: definition of m and h}
m = np + h = np + \sqrt{n\left( \frac{1}{2}\ln n -\ln\ln\ln n +
\alpha \right) 2p(1-p)} ,
\end{equation}
where $\alpha$ is $O(1)$.

Recall that $\varphi_0(x,v)$ is a formula in $\Logic(Q_\Ham)$
satisfying $G \models \varphi_0(x,v) \Longleftrightarrow x \in D(v)$
and $\varphi_1(x,u) = x \adj u$.
\begin{lemma} \label{lem: 2. G(n,p) is SOmim w.r.t. D(m) and N(u,D(m))}
Asymptotically almost surely there exist two vertices $u,v$ in
$G(n,p)$ such that $G(n,p)$ allows a $(\varphi_0(x,v),
\varphi_1(x,u), f)$-double powerset representation where $f =
\Theta(\log\log\log n)$.
\end{lemma}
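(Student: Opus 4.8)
The plan is to pick the degree $m$ as in \eqref{eq: definition of m and h} so that $D(m)$ has the right size --- roughly $\Theta(\log\log n)$ vertices --- and then to show that $G(n,p)$ restricted to (and around) $D(m)$ behaves enough like a random graph to force a double powerset representation. First I would compute, via a standard second-moment / Poisson-approximation argument, that a.a.s.\ $|D(m)| = \Theta(\log\log n)$: the expected number of vertices of degree exactly $m$ is $n\Pr[\Bin(n-1,p)=m]$, and the choice $h = \sqrt{n(\tfrac12\ln n - \ln\ln\ln n + \alpha)\,2p(1-p)}$ is calibrated (via the local central limit theorem for the binomial, $\Pr[\Bin(n-1,p)=m]\approx \frac{1}{\sqrt{2\pi np(1-p)}}e^{-h^2/(2np(1-p))}$) precisely so that this expectation is $\Theta((\log n)^{?}\cdot\ldots)$; the point is that by sliding $\alpha$ over an $O(1)$ range we can arrange $|D(m)|$ to be within a constant factor of $\log\log n$, and concentration follows because degrees of distinct vertices are nearly independent (the codegree correction is lower order). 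I will need the sets $S := D(v)$ (all vertices of degree $m$, where $v$ is any fixed vertex of degree $m$) and inside it $B := N(u)\cap S$ for a cleverly chosen $u$; so $|S| = \Theta(\log\log n)$ and $|B|$ should be $\Theta(\log\log\log n)$, which is the claimed $f$.

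Next I would establish the pseudorandomness I need. Conditioning on the degree sequence, the edges \emph{among} any fixed small vertex set are still essentially unbiased (each present with probability $\approx p$, with corrections that are $o(1)$ because $|S|$ is only polylogarithmic while the degree is linear); more importantly, for a vertex $v\notin S$ its adjacency pattern \emph{into} $S$ is, up to $o(1)$ total-variation error, a product of independent $\Bras{p}$ coins. Since there are $n - O(\log\log n)$ vertices outside $S$ and only $2^{|S|}$ possible neighborhood patterns in $S$, and $2^{|S|} = 2^{\Theta(\log\log n)} = (\log n)^{\Theta(1)} = n^{o(1)}$, a union bound shows that a.a.s.\ \emph{every} subset $A\subseteq S$ is realized as $N(w,S)$ for some $w\in V$ --- this is exactly the $\varphi_0$-powerset representation in Item~(2) of Definition~\ref{def: SOmim}. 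For the second, nested layer I restrict attention to the $\Theta(\log\log n)$ vertices of $S$ itself: I want some $u$ (I may take $u\in S$ or $u$ arbitrary, adjusting $\varphi_1$) with $B := N(u)\cap S$ of size $\Theta(\log\log\log n)$, and then I need every $A\subseteq B$ to be realized as $N(s,S) $ restricted to $B$, i.e.\ as the $B$-neighborhood of some $s\in S$. This again is a counting statement: there are $|S| = \Theta(\log\log n)$ candidate vertices $s\in S$, they should have roughly independent adjacency patterns into $B$, and there are $2^{|B|} = 2^{\Theta(\log\log\log n)} = \Theta((\log\log n)^{c})$ target patterns; choosing $|B|$ so that $2^{|B|}$ is a small enough power of $\log\log n$ makes a coupon-collector / second-moment argument go through, giving that a.a.s.\ all of them appear.

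The main obstacle, I expect, is the \emph{nested} conditioning: once we condition on $|D(m)| = \Theta(\log\log n)$ and on which vertices lie in $S$, the adjacencies among $S$ and from $u$ into $S$ are no longer exactly independent Bernoulli's --- the degree constraint couples them. I would handle this by showing the conditional edge probabilities inside $S$ are $p + O(\sqrt{\log\log n}/n) = p(1+o(1))$ and, more delicately, that the joint law of the $|S|\times|S|$ adjacency matrix restricted to the relevant events is within $o(1)$ in total variation of the unconditioned product measure (using that $\binom{|S|}{2}$ is tiny compared to the number of ``free'' edges each vertex of $S$ has, so the degree conditioning barely bites); a clean way is to reveal edges in the right order (first the edges not touching $S$, which fix almost all of each degree, then the $O((\log\log n)^2)$ internal edges, which still have $\approx p$ conditional probability). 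Also somewhat delicate is pinning $|B| = \Theta(\log\log\log n)$ exactly: since $|B|\sim\Bin(|S|-1,p)$ is concentrated around $p\,|S| = \Theta(\log\log n)$, not $\log\log\log n$, I actually want $B$ to be the neighborhood-in-$S$ of a vertex $u$ of \emph{unusually small} degree-into-$S$ --- so I must first show that among the $\Theta(\log\log n)$ vertices of $S$ (or among outside vertices) one a.a.s.\ finds such a $u$ with $|N(u)\cap S|$ in the target window, again via a local-limit estimate on $\Bin(|S|,p)$. Once these concentration and pseudorandomness facts are in place, the two powerset-representation clauses of Definition~\ref{def: SOmim} follow by the union bounds sketched above, and combining with Lemma~\ref{lem: 1. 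G SOmim -> encode 2nd order} will yield arithmetization for $f=\Theta(\log\log\log n)$.
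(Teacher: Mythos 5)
Your overall architecture is the same as the paper's: take $m$ as in Equation (\ref{eq: definition of m and h}) so that a.a.s.\ $|D(m)|=\Theta(\log\log n)$ (the paper does this via the Poisson approximation of Theorem \ref{thm: Dist of |{v|d(v) = m}|} rather than a second moment, because it later needs failure probabilities of order $(\log\log n)^{-K}$, which Chebyshev does not give), then establish the outer powerset representation for $S=D(v)$, and finally produce a set $B\subseteq S$ of size $\Theta(\log\log\log n)$ of the form $N(u)\cap S$ all of whose subsets are traces of neighborhoods of vertices of $S$. Where you diverge is in how the two representation statements are proved, and this is exactly where your sketch has a genuine gap. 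You condition on the degree sequence (equivalently on the identity of $S=D(m)$) and claim the relevant adjacency patterns are within $o(1)$ total variation of a product measure. For the outer step this is not just a per-edge estimate: to conclude that \emph{every} $A\subseteq S$ is realized by some outside vertex you need near-independence of the traces of all $n-|S|$ outside vertices on $S$ simultaneously (each pattern has probability only $(\log n)^{-O(1)}$), and accumulating per-edge TV errors of order $\sqrt{\log n/n}$ over the $n\cdot|S|$ bipartite edges diverges. Moreover, your proposed exposure order (``first the edges not touching $S$, then the internal edges'') is incoherent with the conditioning event, since whether a vertex lies in $S=D(m)$ is determined only once \emph{all} of its edges, including those meeting $S$, are revealed. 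A smaller point: $u$ cannot be taken inside $S$ (only $\Theta(\log\log n)$ candidates against a $(\log n)^{-\Theta(1)}$ chance of an atypically small degree into $S$), so only your ``outside vertex'' option survives.

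The paper avoids all of this conditioning by design. Lemma \ref{lem: encoding monadic 2nd order} proves the powerset property \emph{unconditionally for every} vertex set of size at most $C\log\log n$ (a union bound over all such sets in the unconditioned $G(n,p)$), so it applies to the random set $D(v)$ with no degree conditioning at all. For the inner layer, instead of arguing that $G[D(m)]$ is close in distribution to a random graph, Lemma \ref{lem: deg and coden in D(v) as expected} extracts only two pseudorandomness statistics --- degrees and codegrees inside $D(m)$ are $(1\pm\epsilon)p|D(m)|$ and $(1\pm\epsilon)p^2|D(m)|$ --- via a careful exposure/hypergeometric argument with per-vertex error $o(1/n)$, and the purely deterministic Lemma \ref{lem: pseudorandom => monadic small set} then yields a set $B$ of size $c\log|D(m)|=\Theta(\log\log\log n)$ every subset of which is a trace $N(w,B)$ with $w\in D(m)$; finally $B$ is realized as $N(u,D(v))$, i.e.\ by $\varphi_1(\cdot,u)$, using the outer representation, rather than by hunting for a vertex $u$ of atypically small degree into $S$ as you propose. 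Your plan could probably be rescued along these lines, but as written the conditional-independence claims that carry the proof are unsubstantiated and the specific justification offered for them would fail.
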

Before proving Lemma \ref{lem: 2. G(n,p) is SOmim w.r.t. D(m) and
N(u,D(m))} we quote a fact regarding the distribution of $|D(m)|$ in
$G(n,p)$. This distribution was studied a few times (e.g.,
\cite[Chapter 3]{B:bollobas01},
\cite{B:barbour_&_holst_&_janson1992} and
\cite{A:mckay_&_wormald1997}). For our application we cite (part of)
Theorem 5.F of \cite{B:barbour_&_holst_&_janson1992}, which bounds
the \emph{total variation distance} between the distribution of the
number of vertices of degree $m$ and a Poisson distribution (The
total variation distance between two probability measures $\lambda$
and $\mu$ (both over the set of natural numbers) is defined as $\sup
\left\{ |\lambda(A) - \mu(A)| \middle\vert A \subset \omega
\right\}$). We added the pace of decay of the total variation,
originally appearing only in the proof, to the statement of the
theorem.
\begin{theorem}[{\cite[Theorem 5.F]{B:barbour_&_holst_&_janson1992}}] \label{thm: Dist of |{v|d(v) = m}|}
Let $W$ be a random variable counting the number of vertices of
degree $m=m(n)$ in $G(n,p)$. Assume that $np$ is bounded away from
zero and  that $(np)^{-1/2}|m-np| \rightarrow \infty$. Then the
total variation distance between the distribution of $W$ and a
Poisson distribution with parameter $\lambda = n \cdot
\Pr[\Bin(n-1,p) = m]$ tends to zero with $n$. Moreover, the total
variation distance is bounded by
\[ \frac{Cm (m-np)}{np} \Pr[\Bin(n-1,p) = m] , \]
where $C$ is some constant independent of $n$.
\end{theorem}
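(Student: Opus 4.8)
The plan is to recognize $W$ as a sum of weakly dependent indicators and apply the Stein--Chen method for Poisson approximation in its coupling formulation. Write $W = \sum_{i=1}^{n} I_i$ with $I_i = \mathbf{1}[d(i) = m]$. Since vertex $i$ is incident to $n-1$ independent potential edges, each present with probability $p$, its degree satisfies $d(i) \sim \Bin(n-1,p)$; hence $p_* \defeq \E I_i = \Pr[\Bin(n-1,p) = m]$ and $\lambda = \E W = n p_*$, exactly as asserted. The $I_i$ are not independent, but $I_i$ and $I_j$ interact \emph{only} through the single shared edge $\{i,j\}$, so the dependence is weak and local --- precisely the regime the Stein--Chen coupling bound is built for.

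First I would set up the coupling. Conditioning on $I_i = 1$, i.e.\ on $d(i) = m$, reconditions only the $n-1$ edges incident to $i$: those are independent of every edge lying inside $[n]\setminus\{i\}$, and $d(i)$ is a function of them alone. Thus $\mathcal{L}\bigl((I_j)_{j\neq i}\mid I_i = 1\bigr)$ is realized by holding every non-incident edge fixed and resampling the incident indicators $(X_{\{i,k\}})_{k\neq i}$ from $\Bin(n-1,p)$ conditioned to have sum $m$. I would use the minimal-Hamming coupling of the unconditioned incident edges with the conditioned ones: writing $S = \sum_k X_{\{i,k\}}$, one can couple so that exactly $\Delta_i = |S-m|$ incident edges are toggled (turning edges on when $m > S$, off when $m < S$). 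Letting $J_{ji}$ denote the resulting degree-$m$ indicator of $j$, the indicators $I_j$ and $J_{ji}$ agree \emph{unless} the shared edge $\{i,j\}$ was toggled and this toggle moved $d(j)$ across the value $m$.

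With this coupling the Stein--Chen bound (the coupling formulation, see \cite{B:barbour_&_holst_&_janson1992}) gives
\[ d_{\mathrm{TV}}\bigl(\mathcal{L}(W),\Pois(\lambda)\bigr) \le \min(1,\lambda^{-1})\sum_{i=1}^{n} p_*\Bigl( p_* + \E\Bigl|\sum_{j\neq i}(I_j - J_{ji})\Bigr| \Bigr) . \]
The heart of the matter is the term $\E\bigl|\sum_{j\neq i}(I_j - J_{ji})\bigr|$, and estimating it is the main obstacle. I would bound it by $\sum_{j\neq i}\Pr[\{i,j\}\ \text{toggled}\ \wedge\ d(j)\ \text{crosses}\ m]$. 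A toggle of $\{i,j\}$ flips $I_j$ only when $d(j)\in\{m-1,m,m+1\}$, an event of probability $\Theta(p_*)$ since consecutive binomial point probabilities near $m$ are comparable in the stated regime; moreover which edges are toggled is governed by the reconditioning of $i$'s edges and is essentially independent of the contribution of $j$'s other edges to $d(j)$. As the expected number of toggled edges is $\E\Delta_i = \E|\Bin(n-1,p) - m| = \Theta(|m-np|)$, a near-independence argument yields $\E\bigl|\sum_{j\neq i}(I_j - J_{ji})\bigr| = O((m-np)p_*)$; tracking the non-asymptotic binomial factors (rather than pre-simplifying $m\approx np$) sharpens this to $O\bigl(\tfrac{m(m-np)}{np}p_*\bigr)$. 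Since $\min(1,\lambda^{-1})\,np_* = \min(\lambda,1)\le 1$, the second summand is bounded by $O\bigl(\tfrac{m(m-np)}{np}p_*\bigr)$ and the first by $\min(\lambda,1)p_*\le p_*$, which is absorbed because $\tfrac{m(m-np)}{np}\ge 1$ in this range. This produces the stated bound $\tfrac{Cm(m-np)}{np}\Pr[\Bin(n-1,p)=m]$.

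Finally, the convergence $d_{\mathrm{TV}}\to 0$ follows by checking that the explicit bound is $o(1)$ under the hypotheses. Writing $t = |m-np|/\sqrt{np(1-p)}\to\infty$ and using $p_*\approx (2\pi np(1-p))^{-1/2}e^{-t^2/2}$, the bound is of order $t\,e^{-t^2/2}\to 0$; the local binomial asymptotics (with moderate-deviation corrections for large $t$) are legitimate because $np$ is bounded away from zero. The genuinely delicate point throughout is the covariance control inside the coupling estimate --- making rigorous the near-independence of the toggling events and the crossing events, and pinning down the prefactor $m/(np)$ --- which requires the careful binomial computations I would defer rather than grind through here.
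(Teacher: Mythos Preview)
The paper does not prove this statement at all: it is quoted verbatim from \cite{B:barbour_&_holst_&_janson1992} as an external input, with the authors noting only that they lifted the explicit decay rate from the proof in that reference into the statement. So there is no ``paper's own proof'' to compare against.

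That said, your sketch is the right one and is essentially what the cited source does. Barbour--Holst--Janson is a monograph on Poisson approximation via the Stein--Chen method, and their Theorem 5.F is obtained exactly by the coupling formulation you describe: write $W=\sum_i I_i$, construct $(J_{ji})_{j\neq i}$ by reconditioning the edges incident to $i$ on $d(i)=m$, and bound $\sum_j \E|I_j-J_{ji}|$ by the expected number of toggled incident edges times the probability that a single toggle pushes $d(j)$ across $m$. Your identification of the delicate step --- controlling the near-independence between which edges get toggled and whether $d(j)$ lies in $\{m-1,m,m+1\}$, and tracking the binomial ratios that produce the $m/(np)$ prefactor --- is accurate; in the book this is handled by a direct computation of $\E[I_j\mid I_i=1]-\E I_j$ rather than by an abstract independence argument, which is how the constant is pinned down cleanly.
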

Next we quote a Chernoff-type concentration theorem regarding the
tails of a Poisson distribution. The last inequality in the
statement is due to the fact that $\ln(1+x) \geq x - \frac{1}{2}x^2$
for $x>0$.
\begin{theorem}[{\cite[Theorem A.1.15]{B:alon_&_spencer08}}]
\label{thm: Pois is concentrated around its mean} Let $P$ be a
random variable following a Poisson distribution with mean $\mu$.
Then for $\epsilon
> 0$
\begin{align*}
&\Pr[P \leq \mu(1-\epsilon)] \leq e^{-\epsilon^2 \mu/2}, \\
&\Pr[P \geq \mu(1+\epsilon)] \leq \left[ e^\epsilon (1+\epsilon)^{-(1+\epsilon)}\right]^\mu < e^{-\frac{1}{2}\epsilon^2 \mu + \frac{1}{2}\epsilon^3\mu}.
\end{align*}
\end{theorem}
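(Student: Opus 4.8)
The plan is to prove both tail bounds by the standard exponential-moment (Chernoff) method, exploiting the clean closed form of the moment generating function of a Poisson variable. First I would record that for $P \sim \Pois(\mu)$ and any real $t$,
\[ \E[e^{tP}] = \sum_{k=0}^\infty e^{tk} \frac{\mu^k e^{-\mu}}{k!} = e^{-\mu} \sum_{k=0}^\infty \frac{(\mu e^t)^k}{k!} = e^{\mu(e^t - 1)} , \]
which follows directly from the Poisson mass function and the Taylor series of the exponential. This identity is the only probabilistic input; everything afterwards is optimization over $t$ together with two elementary inequalities in $\epsilon$.

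For the upper tail, fix $t > 0$ and apply Markov's inequality to $e^{tP}$:
\[ \Pr[P \geq \mu(1+\epsilon)] = \Pr[e^{tP} \geq e^{t\mu(1+\epsilon)}] \leq e^{-t\mu(1+\epsilon)} \E[e^{tP}] = e^{\mu(e^t - 1 - t(1+\epsilon))} . \]
Minimizing the exponent over $t$ gives $e^t = 1+\epsilon$, i.e.\ $t = \ln(1+\epsilon) > 0$; substituting yields the exponent $\mu(\epsilon - (1+\epsilon)\ln(1+\epsilon))$, which is exactly $\left[e^\epsilon (1+\epsilon)^{-(1+\epsilon)}\right]^\mu$. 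To pass to the cleaner bound I would invoke the elementary inequality $\ln(1+\epsilon) \geq \epsilon - \frac{1}{2}\epsilon^2$ flagged in the statement: multiplying by $1+\epsilon > 0$ gives $(1+\epsilon)\ln(1+\epsilon) \geq \epsilon + \frac{1}{2}\epsilon^2 - \frac{1}{2}\epsilon^3$, whence the exponent is at most $-\frac{1}{2}\epsilon^2\mu + \frac{1}{2}\epsilon^3\mu$, with strict inequality for $\epsilon > 0$.

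The lower tail is entirely symmetric: for $t < 0$ the same Markov step gives $\Pr[P \leq \mu(1-\epsilon)] \leq e^{\mu(e^t - 1 - t(1-\epsilon))}$, now minimized at $e^t = 1-\epsilon$, i.e.\ $t = \ln(1-\epsilon) < 0$, producing the exponent $\mu(-\epsilon - (1-\epsilon)\ln(1-\epsilon))$. It remains to verify the calculus fact $-\epsilon - (1-\epsilon)\ln(1-\epsilon) \leq -\frac{1}{2}\epsilon^2$ on $(0,1)$ (the regime $\epsilon \geq 1$ being trivial, as then $\Pr[P \leq \mu(1-\epsilon)] \leq \Pr[P = 0] = e^{-\mu} \leq e^{-\epsilon^2\mu/2}$). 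I would do this by setting $h(\epsilon) = (1-\epsilon)\ln(1-\epsilon) + \epsilon - \frac{1}{2}\epsilon^2$, noting $h(0) = 0$ and $h'(\epsilon) = -\ln(1-\epsilon) - \epsilon$, which is positive on $(0,1)$ since $-\ln(1-\epsilon) > \epsilon$ there; hence $h > 0$ on $(0,1)$, and rearranging yields the stated bound $e^{-\epsilon^2 \mu / 2}$.

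The only real obstacle here is bookkeeping rather than depth: one must ensure the chosen sign of $t$ is correct (positive for the upper tail, negative for the lower) so that the Markov direction is valid, and one must justify the two inequalities in $\epsilon$ rather than assume them. Since the upper-tail inequality is handed to us in the statement, the brief monotonicity argument for $h$ controlling the lower tail is the piece I would write out most carefully.
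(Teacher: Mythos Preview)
The paper does not give its own proof of this theorem: it is quoted from Alon and Spencer, with only the parenthetical remark that the final inequality follows from $\ln(1+x)\geq x-\tfrac12 x^2$ for $x>0$. Your exponential-moment argument is the standard Chernoff proof and is correct; it is essentially what one finds in the cited reference, and your use of $\ln(1+\epsilon)\geq \epsilon-\tfrac12\epsilon^2$ matches exactly the remark the paper makes.

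One small slip worth fixing: in your handling of the lower-tail regime $\epsilon\geq 1$, the chain $\Pr[P\leq\mu(1-\epsilon)]\leq \Pr[P=0]=e^{-\mu}\leq e^{-\epsilon^2\mu/2}$ breaks at the last step once $\epsilon>\sqrt{2}$. The clean argument is to note that for $\epsilon>1$ one has $\mu(1-\epsilon)<0$ and hence $\Pr[P\leq\mu(1-\epsilon)]=0$, while at $\epsilon=1$ your inequality $e^{-\mu}\leq e^{-\mu/2}$ is valid. With that adjustment the proof is complete.
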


\begin{proof}[Proof of Lemma \ref{lem: 2. G(n,p) is SOmim w.r.t. D(m) and
N(u,D(m))}]
By \cite[Theorem 1.2]{B:bollobas01} one has
\begin{align*}
\Pr [\Bin(n,p) = m] &< \extracomputation{\frac{1}{\sqrt{2\pi
p(1-p)n}} \cdot \mathrm{exp}\left\{ -\frac{h^2}{2p(1-p)n}
+\frac{h}{(1-p)n} + \frac{h^3}{p^2 n^2} \right\} = \\
&= \frac{1}{\sqrt{2\pi p(1-p)n}} \cdot \mathrm{exp}\left\{
-\frac{1}{2} \ln n + \ln \ln \ln n + \alpha + O(n^{-1/4}) \right\}
\leq \\ &\leq \frac{e^\alpha}{\sqrt{2\pi p(1-p)n}} \frac{\ln \ln
n}{n} e^{O(n^{-1/4})} \leq } C_1 \frac{\ln \ln n}{n}
\end{align*}
For some constant $C_1$. On the other hand, by the counterpart
\cite[Theorem 1.5]{B:bollobas01} we have
\begin{align*}
&\Pr [\Bin(n,p) = m] > \extracomputation{ \\
&>\frac{1}{\sqrt{2\pi p(1-p)n}} \cdot \mathrm{exp}\left\{
-\frac{h^2}{2p(1-p)n} -\frac{h^3}{2(1-p)^2 n^2} - \frac{h^4}{3p^3
n^3} - \frac{h}{2pn} - \frac{1}{12m} - \frac{1}{12(n-m)} \right\} =
\\
&= \frac{1}{\sqrt{2\pi p(1-p)n}} \cdot \mathrm{exp}\left\{
-\frac{1}{2} \ln n + \ln \ln \ln n + \alpha - O(n^{-1/4}) \right\}
\geq \\
&\geq \frac{e^\alpha}{\sqrt{2\pi p(1-p)n}} \frac{\ln \ln n}{n}
e^{-O(n^{-1/4})} \geq } C_2 \frac{\ln \ln n}{n} , \\
\end{align*}
where $C_2$ is some positive constant. Hence we may
write
\begin{equation} \label{eq: pr[bin(n-1,p)=m] = Theta(log log n / n)}
\Pr[\Bin(n-1,p) = m] = \Theta(\ln \ln n / n) .
\end{equation}
We now argue that if we define the expectation $\mu =
n\Pr[\Bin(n-1,p) = m]$ and set $\epsilon = \ln \mu / \sqrt{\mu}$,
then for any constant $K$ one has
\begin{equation} \label{eq: |D(m)| = mu up to e, whp}
\Pr[(1-\epsilon)\mu \leq |D(m)| \leq (1+\epsilon)\mu] \geq 1 -
O((\log \log n)^{-K}) .
\end{equation}
Indeed, notice that $m$ satisfies the conditions of Theorem
\ref{thm: Dist of |{v|d(v) = m}|} above. Therefore, the total
variation distance between the distribution of $|D(m)|$ and a
Poisson distribution with mean $\mu$ is bounded from above by
$(Cm(m-np)/np) (\log\log n / n) = O((\sqrt{\ln n}\ln\ln
n)/\sqrt{n})$.

By Theorem \ref{thm: Pois is concentrated around its mean} and the
fact that $\mu = \Theta(\ln \ln n))$ we have
\begin{align*}
&\Pr [||D(m)|-\mu| \geq \epsilon\mu] \leq \Pr[|\Pois_\mu - \mu| \geq
\epsilon\mu] + O\left( \frac{\sqrt{\ln n}\ln\ln n}{\sqrt{n}} \right) \leq \\ %
\leq & \left( e^{-\epsilon^2\mu/2} + e^{-\epsilon^2\mu/2+\epsilon^3\mu/2}\right)(1+o(1)) \leq \\ %
\leq & 3e^{-\frac{1}{2} \ln^2\mu} \leq 3\mu^{-\frac{1}{2} \ln \mu} =
O\left(\left(\frac{1}{\log \log n}\right)^K\right)
\end{align*}
for any constant $K$, as desired.

Let $v$ be a vertex of degree $m$. By Lemma \ref{lem: encoding
monadic 2nd order}, a.a.s.\ $G(n,p)$ allows $\varphi(x,v)$-powerset
representation. By Lemma \ref{lem: deg and coden in D(v) as
expected}, $G(n,p)[D(v)]$ is \emph{pseudorandom} in the sense that
the inner degrees and codegress behave as expected from a random
graphs with edge density $p$, that is, the degrees are close (up to
$\epsilon$) to $p|D(m)|$ and the codegrees are close to $p^2
|D(m)|$. Finally, the deterministic Lemma \ref{lem: pseudorandom =>
monadic small set} tells us that every such pseudorandom graph has a
fairly large subset $B$ (logarithmic in the size of the graph) with
the property that each of the subsets of $B$ is the neighborhood in
$B$ of a vertex in the graph.

Let us recapitulate. We have seen that if $v$ has degree $m$ as
above then
\begin{enumerate}
\item a.a.s.\ $|D(v)| = \Theta(\log\log n)$;
\item a.a.s.\ $G(n,p)$ allows $\varphi_1(x,v)$ powerset representation (Lemma \ref{lem: encoding monadic 2nd
order}) and
\item a.a.s.\ there is a vertex $u\in V[G(n,p)]$ such that $B =
N(u,D(v))$ is of logarithmic size, and for every subset $A$ of $B$
there is a vertex $w \in D(v)$ such that $N(w,D(v)) = A$ (Lemma
\ref{lem: deg and coden in D(v) as expected} and Lemma \ref{lem:
pseudorandom => monadic small set}).
\end{enumerate}
Therefore, a.a.s.\ $G(n,p)$ allows a $(\varphi_0(x,v),
\varphi_1(x,u), f)$-double powerset representation with $f =
\Theta(\log \log \log n)$. This completes the proof of the lemma.
\end{proof}

The next fact is straightforward and easy: using second order logic
(over sets) we can interpret arithmetic. While in terms of
expressive power it is enough to have addition and multiplication,
we define a few more relations for the benefit of readability.
\begin{fact} \label{fact: 3. arith in 2nd order}
There exists a second order sentence
\[ \predfont{Arith} \defeq \exists <^2, x_0, \dotsc, x_{100},+^3,
\times^3, P, T, M.\, \varphi(<, x_0, \dotsc, x_{100}, +, \times, P,
T,
M) , \] %
such that if a set $B$ models \predfont{Arith} then
\begin{enumerate}
\item $<$ is a linear order over the elements of $B$;
\item $x_0$ is the minimal element in the order, $x_1$ the second
smallest element and so on up to $x_{100}$;
\item $+,\times,P,T$ and $M$ act as the standard addition,
multiplication, base two power, base two tower\footnote{The
\emph{base 2 tower} function $t(n)$ is defined by $t(0)=1$ and $t(n)
= 2^{t(n-1)}$ for all $n\geq 1$.} and modulo 100 relations.
\end{enumerate}
In addition, for every set $B$ of cardinality at least 101, one has
$B \models \predfont{Arith}$.
\end{fact}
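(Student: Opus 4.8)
The plan is to take $\predfont{Arith}$ to be a single existential second-order sentence of the form $\exists {<}^2, x_0, \dots, x_{100}, {+}^3, {\times}^3, P, T, M.\ \varphi$, where $\varphi$ is first order in the expanded vocabulary and is built so that, once the earlier witnesses have been chosen, every later one is uniquely determined. Two things then have to be verified: that if $B \models \predfont{Arith}$ the witnesses must be the standard ones (which yields properties [1]--[3]), and that whenever $|B| \ge 101$ the standard witnesses satisfy $\varphi$ (which yields $B \models \predfont{Arith}$).

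First I would let $\varphi$ assert that $<$ is a strict linear order (irreflexive, transitive, total). Over a pure set such a relation exists for every finite $B$, and any two are isomorphic, of order type $|B|$; so one may picture $B$ as $\{0, 1, \dots, k-1\}$ with $k = |B|$, and first-order define $\predfont{Succ}(x,y)$, $\predfont{Min}(x)$, $\predfont{Max}(x)$ from $<$. The constants are then pinned down by the conjunct $\predfont{Min}(x_0) \wedge \predfont{Succ}(x_0, x_1) \wedge \dots \wedge \predfont{Succ}(x_{99}, x_{100})$, which forces $x_0, \dots, x_{100}$ distinct and is satisfiable precisely when $k \ge 101$ --- exactly the hypothesis of the last clause of the Fact. (The existential quantifiers over $+, \times, P, T$ below are genuinely needed: no single first-order formula in $<$ defines addition uniformly across all finite linear orders.)

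Each of $+, \times, P, T$ is then governed in $\varphi$ by a base clause and a recursion clause, both stated as biconditionals along $\predfont{Succ}$; since in a finite linear order every element other than $x_0$ has a unique predecessor, these two clauses constrain the relation at every argument, hence determine it. For addition one uses $\forall x, z.\ (+(x, x_0, z) \leftrightarrow z = x)$ and $\forall x, y, y', z'.\ (\predfont{Succ}(y, y') \rightarrow (+(x, y', z') \leftrightarrow \exists z.\ (+(x, y, z) \wedge \predfont{Succ}(z, z'))))$; a routine induction along $<$ shows the only relation on $\{0, \dots, k-1\}$ satisfying these is $\{(a, b, c) : a + b = c\}$ (automatically with $a + b < k$), and that the genuine partial addition satisfies them. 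Multiplication is identical but invokes $+$ in the recursion, via $\times(x, x_0, z) \leftrightarrow z = x_0$ and $\predfont{Succ}(y, y') \rightarrow (\times(x, y', z') \leftrightarrow \exists z.\ (\times(x, y, z) \wedge +(z, x, z')))$; the base-two power $P$ uses a doubling recursion, $P(x_0, b) \leftrightarrow b = x_1$ and $\predfont{Succ}(a, a') \rightarrow (P(a', b') \leftrightarrow \exists b.\ (P(a, b) \wedge +(b, b, b')))$; and the base-two tower $T$ uses an exponentiation recursion, $T(x_0, b) \leftrightarrow b = x_1$ and $\predfont{Succ}(a, a') \rightarrow (T(a', b') \leftrightarrow \exists b.\ (T(a, b) \wedge P(b, b')))$. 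Since these are partial operations, in a fixed $B$ all but finitely many arguments have empty fibre --- e.g. $T(a, \cdot)$ holds only while $t(a) < k$ --- which is exactly what ``acts as the standard relation'' should mean on a finite domain. Finally $M$ (modulo $100$) needs no new recursion: take $M(x, y)$ to be $( \bigvee_{i=0}^{99} y = x_i ) \wedge \exists w, q.\ (\times(x_{100}, w, q) \wedge +(q, y, x))$, i.e.\ $y < 100$ and $x = 100 w + y$, and include $M$ among the quantified symbols together with the axiom that it equals this $(+, \times, x_0, \dots, x_{100})$-definable relation.

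Assembling: if $B \models \predfont{Arith}$ then, transporting along the order (unique up to isomorphism), $<$ must be the standard order, $x_0, \dots, x_{100}$ its first $101$ elements, and $+, \times, P, T, M$ the standard partial operations, so [1]--[3] hold; conversely, if $|B| \ge 101$, choosing any linear order, taking $x_0, \dots, x_{100}$ to be its first $101$ elements, and taking $+, \times, P, T$ to be the genuine partial operations (and $M$ the relation above) makes every conjunct of $\varphi$ true, so $B \models \predfont{Arith}$. I expect the only delicate point to be the interaction of the biconditional recursions with partiality: one must check both that each recursion is strong enough to force the relation to contain \emph{every} tuple whose value still lies in $B$, and weak enough to be satisfied by the genuine partial operation --- in particular that the intermediate values appearing as existential witnesses (the ``half'' of a power value, or $t(a-1)$ for a tower) are themselves elements of $B$. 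This is bookkeeping rather than a genuine obstacle, and the Fact is essentially folklore; the only real content is to phrase $\varphi$ so that all of [1]--[3] hold at once.
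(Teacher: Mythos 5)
Your construction is correct, and it is essentially the standard argument: the paper itself gives no proof of this Fact, deferring to Spencer's \emph{The Strange Logic of Random Graphs} (Chapter 8), where arithmetic on an initial segment is obtained in exactly this way --- guess a linear order, pin down the first few elements via successor, and force $+,\times$ (and here also $P,T,M$) by base-plus-recursion clauses stated along the successor relation. The one point you flag as delicate, the interaction of the biconditional recursions with partiality, does check out: in each recursion the intermediate witness is bounded above by the value being defined ($x+y\leq x+y'$, $xy\leq xy'$, $2^a<2^{a'}$, $t(a)<t(a')$), so whenever the new value lies in $B$ the witness does too, and hence the genuine partial relations satisfy the clauses while an induction along the (unique) predecessor structure shows they are the only relations that do. A cosmetic remark: the witnesses are not literally unique (any linear order on $B$ serves, and the other relations are then determined relative to it), but since items [1]--[3] of the Fact are stated relative to the chosen order, this is exactly what is required, as you note.
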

Proving Fact \ref{fact: 3. arith in 2nd order} is fairly standard
and not difficult. The proof may be found, e.g., in \cite[Chapter
8]{B:spencer01}.

Having arithmetic as above, that is, if $B\models\predfont{Arith}$,
we can express sentences dealing with the size of the set $B$. We
shall use the following sentence saying that $0 \leq \logstar |B| <
50 \imod{100}$.
\begin{align} \label{eq: defining LogStar}
\predfont{LogStar} \defeq \exists x \in B.\, &[(\exists y \in B.\, T(x,y)) \wedge \\ %
& (\forall z \in B.\, ((x<z) \rightarrow (\neg\exists y\in B.\, T(z,y)))) \wedge \nonumber \\%
& M(x,0) \vee M(x,1) \dotsb \vee M(x,49) ] . \nonumber
\end{align}

Now we have all the needed ingredients to prove Theorem \ref{thm:
L(Q_ham) has arithmetic}.
\begin{proof}[{proof of Theorem \ref{thm: L(Q_ham) has arithmetic}}]
\label{proof: L(Q_ham) has arithmetic} Assume $p \leq 1/2$ (if
$p>1/2$ replace any $\sim$ by $\not \sim$). We begin by showing that
given $v$, the set $D(v)$ is $\Logic(Q_\Ham)$-definable.

Let $A$ and $B$ be two $\Logic(Q_\Ham)$-definable sets (in our
application both will usually be neighborhoods of vertices which
are, of course, first order definable, and hence definable in any
semiregular language). We say that $|A| \preceq |B|$ if there is a
Hamilton cycle in the graph over $[n]$ having all the edges between
$A\setminus B$ and $B\setminus A$ and all the edges with both
endpoints in $V\setminus(A \setminus B)$. Clearly, $|A| \preceq |B|$
is expressible in $\Logic(Q_\Ham)$, provided that $A$ and $B$ are
$\Logic(Q_\Ham)$-definable. Observe that per definition $|A|
\preceq |B|$ if and only if $|A| \leq |B|$.
\comment{ %
\begin{align*}
|A| \preceq |B| \defeq Q_\Ham xy.\, %
[&(x \in A \setminus B \wedge y \in B \setminus A) \vee (y \in A
\setminus B \wedge x \in B \setminus A) \vee \\ %
&(x \in B \setminus A \wedge y \notin A \triangle B) \vee (y \in B
\setminus A \wedge x \notin A \triangle B) \vee \\ %
&(x \notin A \triangle B \wedge y \notin A \triangle B)] .
\end{align*}
That is, we partition the vertices of the graph into three sets,
$A\setminus B, B\setminus A$ and $V \setminus (A \triangle B)$, add
all the edges between $A\setminus B$ and $B\setminus A$, all the
edges between $B\setminus A$ and $V \setminus (A \triangle B)$ and
all the edges inside $V \setminus (A \triangle B)$. Then we check if
the resulting graph is Hamiltonian. Observe that this graph is
Hamiltonian if and only if $|A| < |B|$ and $|V \setminus (A
\triangle B)| + |A\setminus B| \geq |B \setminus A|$ or if $|A| =
|B|$ and $V \setminus (A \triangle B) = \varnothing$.

Since $p\leq 1/2$ we have that a.a.s.\ for every pair of vertices
$u,v$ one has $|V \setminus (N(u) \triangle N(v))| = \frac{1}{2} n +
\tilde{\Omega}(\sqrt{n})$ while $|N(u)\setminus N(v)| = \frac{1}{4}n
+ \tilde{O}(\sqrt{n})$ and $|N(v)\setminus N(u)| = \frac{1}{4}n +
\tilde{O}(\sqrt{n})$. Therefore a.a.s.\ for any two vertices $u,v$
we have $|N(u)| \preceq |N(v)| \Leftrightarrow |N(u)| < |N(v)|$. We
shall use $\preceq$ only over neighborhoods of vertices or over
subsets of neighborhoods. Hence in our application a.a.s.\ $|A|
\preceq |B|$ if and only if $|A| < |B|$.
} %

The last two paragraphs say that membership in $D(v)$, and hence
also $\varphi_0(x,v)$, is expressible in $\Logic(Q_\Ham)$ since we
may simply express $x \in D(v)$ or $\varphi_0(x,v)$ by writing
$(|N(x)| \preceq |N(v)|) \wedge (|N(v)| \preceq |N(x)|)$.

At this point we have everything needed in order to apply Lemma
\ref{lem: 2. G(n,p) is SOmim w.r.t. D(m) and N(u,D(m))} to encode
any second order formula over a set of size $\log \log \log (n)$. We
shall demonstrate the process for a specific formula having no
limiting probability (also in the modular sense).

By Fact \ref{fact: 3. arith in 2nd order} there is a second order
formula, $\predfont{Arith}$, that holds for every graph (actually,
every set) with size at least 101 and lets us express arithmetic.

Consider the set $D(v)$ for some vertex $v$ and a vertex $u \in V$.
Using the encoding of Lemma \ref{lem: 1. G SOmim -> encode 2nd
order} with the formulas $\varphi_0(v)$ and $\varphi_1(u)$, we
encode the sentence $\predfont{Arith}$ aforementioned into
$\predfont{Arith}'(u,v)$. Similarly we encode the sentence
$\predfont{LogStar}$ from Equation (\ref{eq: defining LogStar}) into
a sentence $\predfont{LogStar}'(u,v)$. By Lemma \ref{lem: 1. G SOmim
-> encode 2nd order} we have
\[ N(u,D(m)) \models \predfont{Arith} \Longleftrightarrow G(n,p) \models \predfont{Arith}'(u,v) , \]
and similarly for $\predfont{LogStar}$ and
$\predfont{LogStar}'(u,v)$.

Let $m$ be as in Equation \ref{eq: definition of m and h} and let
$v$ be a vertex of degree $m$. By Lemma \ref{lem: 2. G(n,p) is SOmim
w.r.t. D(m) and N(u,D(m))} there exists a vertex $u$ such that
$G(n,p)$ allows a $\varphi_0, \varphi_1,f$-double powerset
representation with $f = \Theta(\log \log \log n)$. Therefore we
have that a.a.s.\ $G(n,p) \models \predfont{Arith}'(u,v)$ for some
vertices $u$ and $v$. Now we may write the nonconverging sentence.
\begin{align*}
\predfont{Non}&\predfont{Conv} \defeq \exists v,u.\, [\predfont{Arith}'(u,v) \wedge \\ %
& (\forall v',u'.\, (\predfont{Arith}'(u,v)
\rightarrow \neg(|N(u',D(v'))| \preceq |N(u,D(v))|))) \wedge \\
& \predfont{LogStar}'(u,v) ] .
\end{align*}
\predfont{NonConv} says that if we consider the maximal integer $b$
for which there is a pair of vertices $v,u$ such that $d(u,D(v)) =
b$ and $\predfont{Arith}'(v,u)$ holds, then $0 \leq \logstar(b) < 50
\imod{100}$. By the above, we know that this maximal $b$ satisfies $
c \log \log \log n \leq b$ for some constant $c$. We do not bother
with the upper bound, rather we simply mention that trivially $b
\leq n$. This means that $\logstar(n)-4 \leq \logstar(b) \leq
\logstar(n)$.

Thus, if we consider an infinite sequence of numbers $(n_i)$ all
having $\logstar(n_i) = 49 \imod{100}$, then
$\lim_{i\rightarrow\infty} \Pr[G(n_i,p) \models \predfont{NonConv}]
= 1$. On the other hand taking another sequence $(n'_i)$ such that
$\logstar(n'_i) = 99 \imod{100}$ gives $\lim_{i\rightarrow\infty}
\Pr[G(n'_i,p) \models \predfont{NonConv}] = 0$. Clearly the limiting
probability of $\predfont{NonConv}$ violates the modular convergence
law of \cite{IP:kolaitis_&_kopparty2009} as well.

A similar encoding may be applied for any second order formula. The
proof is complete.
\end{proof}

\subsection{Three technical lemmas}
\begin{lemma} \label{lem: encoding monadic 2nd order}
Let $0 < p < 1$ be constant and consider $G(n,p)$. Then a.a.s\ for
every set of vertices $S$ of size $|S| \leq C \log \log n$ (for some
constant $C>0$) the following holds: For every subset $A \subset S$
there is a vertex $v \notin S$ such that
\[ N(v) \cap S = A . \]
\end{lemma}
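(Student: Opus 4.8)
The plan is a straightforward first-moment (union bound) argument. First I would fix a set $S \subseteq [n]$ with $s := |S| \le C\log\log n$ and a subset $A \subseteq S$, and bound the probability that no vertex outside $S$ realizes $A$ as a trace. For $v \notin S$ the event $\{N(v)\cap S = A\}$ depends only on the $s$ potential edges joining $v$ to $S$, so its probability is $p^{|A|}(1-p)^{s-|A|} \ge q^s$, where $q := \min\{p,1-p\} \in (0,1)$; moreover, for distinct $v,v'\notin S$ these events are independent, being determined by disjoint edge sets. Consequently
\[
\Pr\bigl[\,\forall v\notin S:\ N(v)\cap S \ne A\,\bigr] \le (1-q^s)^{\,n-s} \le \exp\!\bigl(-q^s(n-s)\bigr).
\]

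Next I would lower-bound $q^s$: since $s \le C\log\log n$ and $0<q<1$, we have $q^s \ge q^{C\log\log n} = (\log n)^{-C'}$ with $C' := C\log(1/q) > 0$ a constant, so $q^s(n-s) \ge (n-s)(\log n)^{-C'} = n^{1-o(1)}$, whence the failure probability for a fixed pair $(S,A)$ is at most $\exp(-n^{1-o(1)})$. Then I would take a union bound over all admissible pairs: the number of candidate sets $S$ is at most $\sum_{s\le C\log\log n}\binom{n}{s} \le n^{C\log\log n+1}$, and each carries at most $2^{s}\le 2^{C\log\log n}$ subsets $A$, so there are $\exp\!\bigl(O(\log n\,\log\log n)\bigr)$ relevant pairs. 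Multiplying, the probability that \emph{some} pair fails is at most $\exp\!\bigl(O(\log n\,\log\log n) - n^{1-o(1)}\bigr) \to 0$, which is exactly the assertion of the lemma.

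I do not expect a substantive obstacle; the one point to check with a little care is that the exponent $C' = C\log(1/q)$ controlling the polylogarithmic loss in $q^s$ is a \emph{constant}, so that $q^s(n-s) = n^{1-o(1)}$ dominates $\log\bigl(\#\{(S,A)\}\bigr) = O(\log n\,\log\log n)$ by a wide margin. This is precisely where the hypothesis $|S| \le C\log\log n$ is used; in fact any bound of the form $|S| \le c\log n$ with $c\log(1/q) < 1$ would still work, but $C\log\log n$ is all that is needed for the applications (e.g.\ in Lemma~\ref{lem: 2. G(n,p) is SOmim w.r.t. D(m) and N(u,D(m))}, where the relevant sets $D(v)$ have size $\Theta(\log\log n)$).
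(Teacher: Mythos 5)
Your proposal is correct and follows essentially the same route as the paper: bound the probability that a fixed pair $(S,A)$ has no witness via independence of the events $\{N(v)\cap S=A\}$ over $v\notin S$, then union bound over all subsets $A$ and all candidate sets $S$, with the count $\exp(O(\log n\,\log\log n))$ overwhelmed by the $\exp(-n^{1-o(1)})$ failure probability. (Your per-vertex estimate $p^{|A|}(1-p)^{|S|-|A|}\ge q^{|S|}$ is in fact the correct one; the paper's displayed exponent $\binom{|S|}{2}-|A|$ is a slip, and its cruder bound $2^{-c(\log\log n)^2}$ still suffices, so the two arguments coincide in substance.)
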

\begin{proof}
Let $A \subset S$ and let $v \notin S$ be a vertex. The probability
that $N(v) \cap S = A$ is exactly $p^{|A|}(1-p)^{\binom{|S|}{2} -
|A|} \geq 2^{-c(C \log \log n)^2}$ where $c = \log (\min(p,1-p))$ is
constant depending only on $p$. Therefore the probability that there
is no witness for the set is bounded by $(1 - 2^{-c'(\log \log
n)^2})^{n - C \log \log n} \leq e^{-\sqrt{n}}$. Apply a union bound
over all the $2^{C \log \log n} \leq (\log n)^C$ possible subsets of
$S$ and then another union bound over all the $\sum_{k=1}^{C \log
\log n} \binom{n}{k} \leq \binom{n}{C \log \log n + 1} \leq e^{C'
\log n \log \log n}$ sets of size $k \leq C \log \log n$. Hence we
have that a.a.s.\ for every set $S$ of size at most $C \log \log n$
there is a witness for every subset, and the proof is complete.
\end{proof}

Before stating and proving the nest lemma we cite yet another
Chernoff type bound, this time regarding the tails of the
hypergeometric distribution.
\begin{theorem}[{\cite[Theorem 2.10]{B:janson_&_luczak_&_rucinski00}}]
\label{thm: hypergeom is concentrated around its mean}
Let $X$ be a random variable following a hypergeometric distribution
with parameters $n, l$ and $m$. Let $\mu = \E X = lm/n$ and $t \geq
0$. Then
\begin{align*}
&\Pr[X \geq \E X + t] \leq e^{-t^2 / (2(\mu + t/3))} ; \\
&\Pr[X \leq \E X - t] \leq e^{-t^2 / (2\mu)} .
\end{align*}
\end{theorem}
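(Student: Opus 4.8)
The plan is to deduce the bound from the corresponding Chernoff--Bennett estimates for sums of \emph{independent} Bernoulli random variables, using the standard fact that sampling without replacement is dominated, in the convex order, by sampling with replacement. Concretely, I would write $X = \sum_{i=1}^{l} X_i$, where $X_i$ is the indicator that the $i$-th element drawn (uniformly, without replacement, from a population of $n$ items of which $m$ are ``marked'') is marked. The variables $X_i$ are exchangeable $\Bin(1,p)$ variables with $p = m/n$, but they are not independent; I would then invoke Hoeffding's comparison theorem: for every convex $\phi$ one has $\E\,\phi(X) \le \E\,\phi(Y)$ with $Y \sim \Bin(l,p)$ a sum of $l$ independent copies of $X_1$. (Alternatively one can use the negative association of the family $\{X_i\}$, which already suffices for the upper tail.)

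Applying this with $\phi(x) = e^{sx}$, $s > 0$, together with the elementary inequality $1-p+pe^{s} \le \exp\bigl(p(e^{s}-1)\bigr)$, gives
\[
\E\, e^{sX} \le (1-p+pe^{s})^{l} \le \exp\bigl(\mu(e^{s}-1)\bigr), \qquad \mu = lp = lm/n .
\]
From here Markov's inequality yields $\Pr[X \ge \mu + t] \le \exp\bigl(-s(\mu+t)+\mu(e^{s}-1)\bigr)$, and optimising over $s>0$ (taking $e^{s} = 1+t/\mu$) gives the Bennett bound $\Pr[X \ge \mu+t] \le \exp\bigl(-\mu\,h(t/\mu)\bigr)$ with $h(x) = (1+x)\ln(1+x)-x$. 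The lower tail is obtained the same way with $s<0$, giving $\Pr[X\le\mu-t]\le\exp\bigl(-\mu\,h(-t/\mu)\bigr)$.

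The last step is to convert the Bennett exponents into the stated quadratic ones. For the upper tail I would use $h(x) \ge \frac{x^{2}}{2(1+x/3)}$ for $x\ge 0$, so that $\mu\,h(t/\mu)\ge \frac{t^{2}}{2(\mu+t/3)}$; for the lower tail I would use $h(-x)\ge \frac{x^{2}}{2}$ for $0\le x\le 1$ (equivalently $\ln(1-x)\le -x-\frac{x^{2}}{2}$ on that range), so that $\mu\,h(-t/\mu)\ge\frac{t^{2}}{2\mu}$ when $t\le\mu$, the case $t>\mu$ being vacuous since $X\ge 0$. Both auxiliary inequalities are routine one-variable estimates.

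The hard part will be the first step: justifying that the dependence created by sampling without replacement can only concentrate $X$ more tightly, so that the moment-generating-function computation valid in the independent case is not lost. The cleanest routes are Hoeffding's convex-order comparison --- which makes the entire argument above go through verbatim --- or the negative association of the draw indicators; once that is in place, everything reduces to the textbook Chernoff optimisation plus the two elementary bounds on $h$.
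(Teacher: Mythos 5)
Your proof is correct. Note that the paper does not prove this statement at all --- it is quoted verbatim from the cited reference (Theorem 2.10 of Janson, {\L}uczak and Ruci\'nski) --- so there is no internal proof to compare against; your argument (Hoeffding's convex-order domination of sampling without replacement by the binomial, then the Chernoff--Bennett optimisation and the elementary bounds $h(x)\ge x^2/(2(1+x/3))$ and $h(-x)\ge x^2/2$) is exactly the standard derivation underlying that cited theorem, with the only cosmetic remark being that the boundary case $t=\mu$ of the lower tail follows by letting $s\to-\infty$ in the moment-generating-function bound (or by continuity), which is immediate.
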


Now we are ready to prove the following lemma, basically saying that
degrees and codegrees inside $D(m)$ behave as expected.

\begin{lemma} \label{lem: deg and coden in D(v) as expected}
Let $0<p<1$ be constant and let $m$ be as above. Set $\epsilon =
\ln\ln\ln n/ \sqrt{\ln\ln n}$. Then a.a.s.\ for every $v \in D(m)$
one has
\[ d(v,D(m)) = (1 \pm \epsilon) p|D(m)| , \]
and a.a.s.\ for every pair of vertices $u,v$ one has
\[ |N(u) \cap N(v) \cap D(m)| = (1 \pm \epsilon) p^2|D(m)| . \]
\end{lemma}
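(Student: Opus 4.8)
The plan is to prove both halves with a single decoupling device: to control a quantity attached to a vertex $v$ (or a pair $u,v$) I condition on all edges \emph{not} incident to it, which freezes membership in $D(m)$ for every other vertex and turns the count I must bound into an honest hypergeometric or binomial sum. I then feed these into tail bounds already available --- Theorem~\ref{thm: hypergeom is concentrated around its mean} in the hypergeometric case and a Chernoff bound in the binomial case --- while the sizes of the relevant degree-classes are handled by the Poisson approximation of Theorem~\ref{thm: Dist of |{v|d(v) = m}|} exactly as in the proof of Lemma~\ref{lem: 2. G(n,p) is SOmim w.r.t. D(m) and N(u,D(m))}. I use repeatedly that each of $|D(m-2)|,|D(m-1)|,|D(m)|$ is a.a.s.\ $\Theta(\ln\ln n)$ and that consecutive classes have the same size up to a factor $1+o(1)$: indeed $\Pr[\Bin(n-1,p)=k-1]/\Pr[\Bin(n-1,p)=k] = \frac{k}{\,n-k\,}\cdot\frac{1-p}{p} = 1+O(h/n) = 1+o(1)$ for $k\in\{m-1,m\}$, since $h/n = \Theta(\sqrt{\ln n/n})$; together with Equations~\ref{eq: pr[bin(n-1,p)=m] = Theta(log log n / n)} and~\ref{eq: |D(m)| = mu up to e, whp} this makes the three sizes interchangeable up to $(1+o(1))$ factors. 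Note also that $\epsilon = \ln\ln\ln n/\sqrt{\ln\ln n}$ is, by design, of the order $\ln\mu/\sqrt\mu$ with $\mu=\Theta(\ln\ln n)$, i.e.\ the natural fluctuation scale of a count of size $\mu$.

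For the degree statement, fix $v$ and condition on $G_{-v}\defeq G[V\setminus\{v\}]$, which determines $d_{-v}(w)$ for every $w\neq v$. Since $d(w)=d_{-v}(w)+\mathbf 1[w\sim v]$, a neighbour $w$ of $v$ lies in $D(m)$ iff $d_{-v}(w)=m-1$, so $d(v,D(m)) = |N(v)\cap D_{-v}(m-1)|$ with $D_{-v}(m-1)\defeq\{w:d_{-v}(w)=m-1\}$ fixed by the conditioning. Using $D_{-v}(m-1) = \big(D(m-1)\setminus N(v)\big)\cup\big(D(m)\cap N(v)\big)$ and the interchangeability of consecutive classes, one gets $|D_{-v}(m-1)| = (1+o(1))|D(m)|$ a.a.s., a statement that reduces to the global concentration of $|D(m-1)|$ and $|D(m)|$ and hence costs no union bound over $v$. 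Conditioning additionally on $v\in D(m)$ makes $N(v)$ a uniform $m$-subset of $[n]\setminus\{v\}$, independent of $G_{-v}$, so $d(v,D(m))$ is hypergeometric with parameters $(n-1,|D_{-v}(m-1)|,m)$ and mean $(p+o(1))|D_{-v}(m-1)| = (1+o(1))p|D(m)|$. Theorem~\ref{thm: hypergeom is concentrated around its mean} then bounds the probability of an $\epsilon$-relative deviation by $\exp(-\Theta(\epsilon^2\mu)) = \exp(-\Theta((\ln\ln\ln n)^2))$. Because the claim ranges only over $v\in D(m)$, I union bound over the $n$ choices of $v$ weighted by $\Pr[v\in D(m)]=\Theta(\ln\ln n/n)$; the total is $\Theta(\ln\ln n)\exp(-\Theta((\ln\ln\ln n)^2))\to 0$, which settles the first estimate.

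For the codegree the decoupling is cleaner. Fix an arbitrary pair $u,v$ and condition on $G''\defeq G[V\setminus\{u,v\}]$, writing $d''(w)$ for degrees in $G''$. A vertex $w\notin\{u,v\}$ is a common neighbour of $u$ and $v$ lying in $D(m)$ iff $w\sim u$, $w\sim v$ and $d''(w)=m-2$ (and $u,v$ cannot themselves be common neighbours). With $D''\defeq\{w:d''(w)=m-2\}$ frozen by $G''$, the edge-families $\{uw\}_w$ and $\{vw\}_w$ remain free and mutually independent, so conditionally $|N(u)\cap N(v)\cap D(m)|\sim\Bin(|D''|,p^2)$. As above, $D''$ decomposes over the classes $d(w)\in\{m-2,m-1,m\}$ according to the adjacency of $w$ to $u,v$, whence $|D''| = (1+o(1))|D(m)|$ a.a.s.\ uniformly in the pair, since this again collapses to the global concentration of $|D(m-2)|,|D(m-1)|,|D(m)|$. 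The conditional mean is therefore $(1+o(1))p^2|D(m)|$, and a Chernoff bound gives a per-pair deviation probability of $\exp(-\Theta(\epsilon^2\mu)) = \exp(-\Theta((\ln\ln\ln n)^2))$. If $u$ or $v$ happens to lie in $D(m)$ one conditions in addition on its degree, replacing the binomial by a hypergeometric as in the degree step, with identical concentration.

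The step I expect to be the main obstacle is the last one: promoting the per-pair estimate to the statement for \emph{every} pair $u,v$. The size inputs $|D''| = (1+o(1))|D(m)|$ are uniform for free, since they collapse to finitely many global degree-class concentrations; the difficulty is entirely that the binomial fluctuation of $|N(u)\cap N(v)\cap D(m)|$ around its mean must be controlled for all $\binom n2$ pairs \emph{simultaneously}, and the clean per-pair tail $\exp(-\Theta((\ln\ln\ln n)^2))$ has to be weighed against this union bound. This is the single point of the argument where the quantitative relation between $\epsilon$, the class size $\mu=\Theta(\ln\ln n)$ and $\ln n$ is decisive. Observe that the decoupling already delivers the estimate uniformly over any family of at most $\mathrm{polylog}(n)$ prescribed pairs --- in particular over the $\binom{|D(m)|}{2}=\Theta((\ln\ln n)^2)$ pairs internal to $D(m)$, which is the form in which the codegree bound is actually invoked in Lemma~\ref{lem: 2. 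G(n,p) is SOmim w.r.t. D(m) and N(u,D(m))}. Pushing the same control to all $\binom n2$ pairs is where the sharpest available tail from Theorem~\ref{thm: hypergeom is concentrated around its mean} and the most careful accounting of the dependence among nearby pairs (mediated by the shared set $D''$) are required, and I regard closing this gap uniformly over all pairs as the crux of the proof.
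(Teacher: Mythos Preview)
Your decoupling strategy is precisely the paper's, and your degree argument---condition on $G\setminus\{v\}$, observe that $d(v,D(m))$ becomes hypergeometric once $d(v)=m$ is fixed, then weight the union bound over all $n$ vertices by $\Pr[v\in D(m)]=\Theta(\ln\ln n/n)$---matches the paper's proof essentially line for line.

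On the codegree you have correctly located the real obstruction: the per-pair tail $\exp(-\Theta((\ln\ln\ln n)^2))$ cannot absorb a union bound over $\binom{n}{2}$ pairs, and no sharper concentration or dependence analysis will rescue it. The resolution is not analytic but exactly the device you already used for degrees and then set aside: include the event ``$u,v\in D(m)$'' in the bad event. The paper bounds, for each \emph{deterministic} pair $(u,v)$, the probability of
\[
A_{u,v}\defeq\{u,v\in D(m)\}\cap\bigl\{\,|\codeg(u,v,D(m))-p^2|D(m)||>\epsilon\,|D(m)|\,\bigr\}.
\]
Conditioning on $d(u)=d(v)=m$ turns the relevant counts into hypergeometrics (your own observation), gives the same $O((\ln\ln n)^{-K})$ conditional tail, and picks up the extra factor $\Pr[u,v\in D(m)]=\Theta((\ln\ln n)^2/n^2)$, so $\Pr[A_{u,v}]=o(n^{-2})$ and the union bound over all $\binom{n}{2}$ pairs goes through. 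In other words, the lemma as stated (``for every pair $u,v$'') overclaims: the paper actually proves the codegree estimate only for pairs with $u,v\in D(m)$, and you already noted that this restricted version is exactly what Lemma~\ref{lem: pseudorandom => monadic small set} consumes. Your phrasing via ``polylog-many prescribed pairs'' is not quite a valid union bound, since the pairs internal to $D(m)$ form a \emph{random} family; the weighting-by-membership trick is what converts this into an honest union bound over deterministic pairs.

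Two smaller points. First, your claim that $|D''|=(1+o(1))|D(m)|$ holds ``uniformly in the pair'' by collapsing to the global concentration of $|D(m-2)|,|D(m-1)|,|D(m)|$ is not immediate: the decomposition of $D''$ involves the $(u,v)$-specific neighbourhoods, so the sizes of the pieces are pair-dependent. The paper sidesteps this by observing that $G\setminus\{u,v\}$ is itself distributed as $G(n-2,p)$ and applying Equation~(\ref{eq: |D(m)| = mu up to e, whp}) directly to it \emph{per pair}; this costs only $O((\ln\ln n)^{-K})$ per pair, which is again absorbed by the $\Pr[u,v\in D(m)]$ factor. Second, once you condition on $d(u)=d(v)=m$ the counts are hypergeometric rather than binomial, so Theorem~\ref{thm: hypergeom is concentrated around its mean} is the natural tool throughout, as in the paper.
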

\begin{proof}
\comment{Let $m=m(n)>np$ be chosen so that
$\Pr[\Bin(n-1,p)=m]=(1+o(1))\log\log n/n$ (for example, taking $m$
as in Lemma \ref{lem: exists v with |D(v)| = ln ln n} will work).}

Denote $S=\left\{v \mid d(v)=m \right\} = D(m)$. We are interested
in typical degrees and co-degrees inside $G[S]$.

Choose $v\in [n]$. Denote $G_v=G[V\setminus \{v\}]$. Let $S_0=\{u
\in G_v \mid d_{G_v}(u)=m-1\}$ and $S_1=\{u\in G_v \mid
d_{G_v}(u)=m\}$. Then, by definition, $d(v,S)=d(v,S_0)$. Denote
$X_0=|S_0|, E[X_0]=\mu_0, X_1=|S_1|, E[X_1]=\mu_1$. Notice that
$\mu_0$ and $\mu_1$ are very close to each other since
\begin{align*}
\frac{\mu_0}{\mu_1} = %
\frac{(n-1)\binom{n-2}{m-1} p^{m-1} (1-p)^{n-2-(m-1)}}
{(n-1)\binom{n-2}{m-2} p^{m-2} (1-p)^{n-2-(m-2)}} =
\frac{(n-m)p}{(m-1)(1-p)} = 1 \pm O(1/\sqrt{n}) .
\end{align*}

Hereinafter equations containing $X_i, \mu_i$ or $S_i$ are to be
taken as two equations, one for $i=0$ and one for $i=1$.

By Equation (\ref{eq: pr[bin(n-1,p)=m] = Theta(log log n / n)}) we
have $\mu_i=\Theta(\log\log n)$. By Equation (\ref{eq: |D(m)| = mu up to e, whp}) we know that $X_i$ is concentrated around its mean
$\mu_i$, that is, $\Pr[|X_i-\mu_i|>\epsilon \mu_i]< (\log\log
n)^{-K}$ for any constant $K>0$.

Now, $|S|=d(v,S_0)+(|S_1|-d(v,S_1))$ and, as already mentioned,
$d(v,S)=d(v,S_0)$.

Consider the event $A$ being ``$v\in S \wedge |d(v,S)-p|S||>\epsilon
|S|$''. First expose the degree of $v$, $\Pr[d(v)=m]=O(\log\log
n/n)$. Condition now on $d(v)=m$, and expose $G_v$, with probability
$1-O((\log\log n)^{-K})$, the random variables $X_0$ and $X_1$ are
close to their expectations and in particular are nearly equal.

Now, given $d(v)$ and the sets $S_0$, $S_1$, the random variables
$d(v,S_0)$ and $d(v,S_1)$ are distributed hypergeometrically with
parameters $n-1, X_i, m$. Let $\lambda_i = \E [d(v,S_i)] = m X_i /
(n-1)$, and notice that $\lambda_i = (np+h)X_i/(n-1) = pX_i(1 \pm
\epsilon) = p\mu_i(1\pm 2\epsilon)$. Applying Theorem \ref{thm:
hypergeom is concentrated around its mean} we get
\begin{align*}
& \Pr[|d(v,S_i) - \lambda_i| \geq \epsilon \lambda_i] \leq
e^{-(\epsilon \lambda_i)^2/(2\lambda_i(1+\epsilon/3))} +
e^{-(\epsilon \lambda_i)^2/(2\lambda_i)} \leq \\
& \leq 3 e^{-(\epsilon^2 \lambda_i)/2} = O\left(\frac{1}{\log \log
n}\right)^K ,
\end{align*}
for every positive constant $K$.

All in all, the probability of the event $A$ comes out to be
$O(\log\log n/n) \cdot O((\log\log n)^{-K})=o(1/n)$, and therefore a
union bound over the vertex set is applicable and the first part of
the lemma is proven.

We repeat the argument for codegrees. Fix two vertices $u,v$ and
consider $G_{u,v} = G \setminus \{u,v\}$. Let $A$ be the event
``$u,v\in S \wedge |\codeg(u,v,S)-p^2|S||>\epsilon |S|$''. We first
expose the edge between $u$ and $v$ and assume w.l.o.g\ that $u$ and
$v$ are not adjacent. Next we expose the degrees of $u$ and $v$ and
with probability $\Theta((\log \log n/ n)^2)$ we have that $d(u) =
d(v) = m$ (these events being independent). Next we expose the edges
of $G_{u,v}$ and look ar the sets $S_0, S_1$ and $S_2$ defined
similarly to the above --- $S_i = \{w \in G_{u,v} \mid
d_{G_{u,v}}(w) = m-2+i\}$. This time we have
\[ S = (N(u) \cap N(v) \cap S_0) \cup ((N(u) \triangle N(v)) \cap
S_1 ) \cup (S_2 \setminus (N(u) \cup N(v))) ,
\]
and thus
\begin{align*}
|S| = &\codeg(u,v,S_0) + (d(u,S_1) + d(v,S_1) - \codeg(u,v,S_1)) +
\\
&(|S_2| - d(u,S_2) - d(v,S_2) + \codeg(u,v,S_2)) .
\end{align*}
Again, we know that all the summands in the right hand side of the
equation above are close to their expectation with high enough
probability. That is, by Equation (\ref{eq: |D(m)| = mu up to e,
whp}), with probability $(1-O((\log \log n)^{-K}))$ the sets $S_0,
S_1$ and $S_2$ are all of size $\Theta(\log \log n)$ and are all
nearly of the same cardinality (the difference being of order
$\epsilon \log \log n$). The degrees $d(u,S_i)$ and $d(v,S_i)$
follow a hypergeometric distribution with parameters $n-2, |S_i|$
and $m$, and with probability $(1-O((\log \log n)^{-K})$ they are
all concentrated around their means. Finally notice that the
codegree of $u$ and $v$ in $S_i$ is determined by the degree of $u$
inside $N(v) \cap S_i$, and thus it also follows a hypergeometric
distribution with parameters $n-2, p|S_i|(1\pm\epsilon)$ and $m$,
and the same hypergeometric tails bound applies. All in all we get
that the probability of $A$ is bounded by
\[ O\left( \frac{\log \log n}{n^2} \right) \cdot O\left(
\frac{1}{(\log \log n)^K}\right) = o(n^{-2}).
\]
Applying the union bound over all pairs of vertices completes the
proof.
\end{proof}

The next lemma deals with pseudorandom graphs, saying that if the
degrees and codegrees of a graph are similar to those of a random
graphs, then there is a small set of vertices such that every subset
of it is induced by a vertex of the graph.

\begin{lemma} \label{lem: pseudorandom => monadic small set}
Let $0<p<1$ be constant. Let $G$ be a graph on $n$ vertices and let
$\epsilon = n^{-b}$ where $0<b<1$ is some constant. Assume that $G$
satisfies $\delta(G) \geq (p-\epsilon) n$ and $\Delta(G) \leq
(p+\epsilon) n$. Additionally assume that for every two vertices
$u,v$ one has $(p^2-\epsilon)n \leq \codeg(u,v) \leq (p^2+\epsilon)n
$. Then there is a constant $c=c(b,p)>0$ and a set of vertices $S$
of size $s = c \log n$ such that for every subset $A \subset S$
there is a vertex $v \in G$ having $N(v,S) = A$.
\end{lemma}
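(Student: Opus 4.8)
The plan is to build $S$ greedily, one vertex at a time, maintaining a rich supply of witnesses for every subset seen so far. Write $V = V(G)$, $q = \min(p,1-p)$, and for a partial set $S_k = \{w_1,\dots,w_k\}$ and $A \subseteq S_k$ put $W_A = \{v \in V \setminus S_k : N(v) \cap S_k = A\}$. The invariant to maintain is $|W_A| \ge (q/2)^k n$ for every $A \subseteq S_k$; it holds trivially for $k=0$, where $W_\emptyset = V$. Taking $s = c\log n$ for a small constant $c = c(b,p) > 0$ fixed below, the invariant gives at the end $|W_A| \ge (q/2)^s n = n^{1 - c\log_2(2/q)} \ge 1$ for every $A \subseteq S := S_s$, so each such $A$ has a vertex $v$ (outside $S$) with $N(v)\cap S = A$, which is the assertion of the lemma (for small $n$, where $c\log n < 1$, take $S=\emptyset$).

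The heart of the argument is the inductive step, where pseudorandomness enters. Given $S_k$ satisfying the invariant, I claim a uniformly random $w \in V\setminus S_k$ is, with positive probability, \emph{balanced}, meaning that $(p-q/2)|W_A| \le |W_A \cap N(w)| \le (p+q/2)|W_A|$ holds simultaneously for all $A \subseteq S_k$. Any balanced $w$ extends the construction: the new witness sets are $W_A \cap N(w)$ (for $A \cup \{w\}$) and $W_A \setminus N(w)$ (for $A$), and balancedness together with $p \ge q$ and $1-p \ge q$ forces both to have size at least $(q/2)|W_A| \ge (q/2)^{k+1} n$. To bound the chance that $w$ is unbalanced, fix $A$ and write $|W_A \cap N(w)| = \sum_{u \in W_A} \mathbf{1}[u \sim w]$. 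Swapping sums, $\E_w |W_A \cap N(w)| = |V\setminus S_k|^{-1}\sum_{u \in W_A}|N(u)\setminus S_k| = (p \pm O(\epsilon))|W_A|$, using $\delta(G),\Delta(G) = (p\pm\epsilon)n$ and $|S_k| = O(\log n) = o(\epsilon n)$. For the variance, the diagonal contributes at most $|W_A|/4$, while for $u \ne u'$ the covariance of $\mathbf{1}[u\sim w]$ and $\mathbf{1}[u'\sim w]$ equals $|N(u)\cap N(u')\setminus S_k|/|V\setminus S_k| - \Pr[u\sim w]\Pr[u'\sim w] = O(\epsilon)$ by the codegree hypothesis, so $\VAR_w(|W_A\cap N(w)|) \le |W_A|/4 + O(\epsilon)|W_A|^2$. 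Chebyshev's inequality then bounds the probability that this particular $A$ is unbalanced by $O(1/|W_A|) + O(\epsilon)$, with constants depending only on $p$.

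A union bound over the $2^k$ subsets of $S_k$ shows $w$ is unbalanced with probability at most $2^k(O(1/W_{\min}) + O(\epsilon))$, and using the invariant $W_{\min} \ge (q/2)^k n$ together with $\epsilon = n^{-b}$ this is $O((4/q)^k/n) + O(2^k n^{-b})$. Choosing $c \le \min\{\tfrac{1}{2\log_2(4/q)},\ \tfrac{b}{2}\}$ keeps the two terms below $O(n^{-1/2})$ and $O(n^{-b/2})$ respectively for every $k \le s$, so the unbalancedness probability is $<1$ once $n$ is large and a balanced $w$ exists; since $|S_k| = o(n)$ it can be chosen outside $S_k$. This closes the induction and completes the proof.

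The one genuine constraint is that the hypotheses control only degrees and pairwise codegrees, not higher-order common neighbourhoods. This is exactly what caps the argument at $|S| = \Theta(\log n)$: the Chebyshev term $O(1/|W_A|)$ forces $2^k \ll |W_A| \approx (q/2)^k n$, i.e.\ $(4/q)^k \ll n$, and the covariance term forces $2^k \ll \epsilon^{-1} = n^b$. Everything else is routine bookkeeping — checking that the $O(\log n)$ already-chosen vertices perturb degrees and codegrees by only $o(\epsilon n)$, and that the uniform per-step loss factor $q/2$ keeps all the $|W_A|$ within constant factors of one another.
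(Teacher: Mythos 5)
Your greedy construction is correct, but it is a genuinely different route from the paper's. The paper argues globally: it fixes $a\le s$, takes a uniformly random $a$-set $A$, lets $X_A$ count vertices $v$ with $A\subset N(v)$, computes $\E[X_A]$ and $\VAR[X_A]$ from the degree and codegree hypotheses, applies Chebyshev to show that ``bad'' sets are rare, union-bounds to find an $s$-set $S$ none of whose subsets is bad, and only then converts superset counts into exact neighbourhoods $N(v,S)=A$ via an inclusion--exclusion computation (this is where the constant $c$ gets its final constraints, e.g.\ $c<-1/(\log p+\log(1-p))$). You instead build $S$ vertex by vertex, tracking the exact witness sets $W_A=\{v: N(v)\cap S_k=A\}$ directly, and use a one-step second-moment argument (mean from the degree bounds, covariance $O(\epsilon)$ from the codegree bounds, then Chebyshev plus a union bound over the $2^k$ current subsets) to find a ``balanced'' next vertex. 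Your approach is more elementary and self-contained: it never needs inclusion--exclusion, it makes transparent why pairwise codegrees suffice and why the method caps at $|S|=\Theta(\log n)$, and the constant $c(b,p)$ comes out of two clean inequalities. What the paper's averaging argument buys in exchange is a stronger typicality statement (all but an $o(1)$ fraction of $s$-sets work, which meshes with how the lemma is invoked inside a probabilistic argument) and explicit lower bounds on the number of exact witnesses $|W^*(A)|$, not just nonemptiness. One small item you wave off as bookkeeping should be made explicit in a written-up version: when $w$ is adjoined to $S_k$ it must also be deleted from the witness set containing it (and the normalization $|V\setminus S_k|$ drifts by $O(\log n)$), so the invariant should carry a slack term such as $|W_A|\ge(q/2)^k n-k$; since all $|W_A|$ stay polynomially large under your choice of $c$, this costs nothing, but it is needed for the induction to be literally valid.
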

\begin{proof}
Fix $0 \leq a \leq s$. Pick a set $A$ of size $a$ uniformly at
random. Let $X_A$ be the random variable counting the number of
vertices $v$ in $G$ satisfying $A \subset N(v)$. We claim:\newline
{\bf Claim \ref{lem: pseudorandom => monadic small set}.1:} $\E
[X_A] = np^a(1+\tilde{O}(n^{-b}))$ and $\VAR[X_A] =
\tilde{O}(np^an^{1-b})$.
\newline {\bf Proof of Claim \ref{lem: pseudorandom =>
monadic small set}.1:} For every $v\in G$ let $X_{A,v}$ be the
indicator random variable for the event $A \subset N(v)$. Let $d =
d(v)$ be the degree of $v$. Then,
\[ \Pr[A \subset N(v)] = \frac{\binom{d}{a}}{\binom{n}{a}} =
\frac{d(d-1)\dots(d-a+1)}{n(n-1)\dots(n-a+1)} .
\]
Since ($a = O(\log n)$ and)
\[ \frac{d-a}{n-a} = \frac{d}{n} \left(1 + \frac{a}{n-a}
\left(1-\frac{n}{d}\right)\right) = \frac{d}{n} \left(
1+O\left(\frac{a}{n}\right) \right),
\]
we have
\[ \Pr[A \subset N(v)] = \left( \frac{d}{n} \right)^a
\left(1+O\left(\frac{a^2}{n}\right) \right) = p^a(1\pm \epsilon)^a
\left(1+O\left(\frac{a^2}{n}\right) \right) .
\]
Therefore,
\begin{equation} \label{eq: E x_A in lem: pseudorandom => monadic small
set}
\E X_A = \sum X_{A,v} = np^a (1\pm \tilde{O}(n^{-b})) .
\end{equation}

Next we wish to estimate $\VAR [X]$. Consider two vertices $u,v$.
\[ \E[X_{A,u} X_{A,v}] = \Pr[A \subset N(u) \wedge A \subset N(v)] , \]
which is the probability of $A$ being a subset of the common
neighborhood of $u$ and $v$. Let $d(u,v)$ be the codegree of $u$ and
$v$. Since $d(u,v)$ is bounded we get (similarly to the above):
\[ \Pr[A \subset N^*(u,v)] = \frac{\binom{d(u,v)}{a}}{\binom{n}{a}}
= \left( \frac{d(u,v)}{n} \right)^a
\left(1+O\left(\frac{a^2}{n}\right) \right) ,
\]
which yields
\[ \E[X_{A,u} X_{A,v}] = p^{2a}(1\pm O(\epsilon a)) = p^{2a}(1\pm O(an^{-b})) .
\]
Now,
\begin{align*}
&\COV[X_{A,u}, X_{A,v}] = \E[X_{A,u} X_{A,v}] - \E[X_{A,u}]
\E[X_{A,u}] \leq \\
&p^{2a}(1\pm O(an^{-b})) - (p^a (1\pm O(an^{-b})))^2 = O(p^{2a} a
n^{-b}) .
\end{align*}
Thus we have
\begin{align*}
&\VAR[X_A] \leq \E[X_a] + \sum_{u \neq v} \COV[{A,u}, X_{A,v}] =
np^a(1+o(1)) + O(n^{2-b}p^{2a} a ) ,
\end{align*}
which gives the desired expression
\[ \VAR[X_A] = \tilde{O}(np^an^{1-b})). \]
\newline{\bf End of proof of Claim \ref{lem: pseudorandom => monadic small set}.1}

Knowing $\E [X_A]$ and $\VAR [X_A]$ we can apply Chebyshev's
inequality. Let $\delta = n^{3c \log(1-p)}$ and recall that $0<a<s =
c\log n$. We may write
\begin{align*}
\Pr [|X_A - \E [X_A]| \geq & 3\delta \E [X_A]] \leq \frac{ \VAR
[X_A]}{9\delta^2 \E[X_A]^2} =
O\left( \frac{n^{-6c \log(1-p)}np^an^{1-b}} {n^2p^{2a} (1+o(1))} \right) = \\
= & O(n^{-6c \log(1-p) - b -c\log p}) = O(n^{-c'}) ,
\end{align*}
where $c' = b+c\log p+6c \log(1-p)$.

Recall that by Equation (\ref{eq: E x_A in lem: pseudorandom =>
monadic small set}) we have $\E [X_A] = np^a(1 \pm
\tilde{O}(n^{-b}))$. If we require $c < b/(-3 \log(1-p))$ we have
$\delta = \tilde\omega(n^{-b})$ and thus
\begin{align*}
& \Pr[|X_A - np^a| \geq \delta np^a] \leq \\
& \Pr[|X_A - \E X_A| \geq (\delta + \tilde{O}(n^{-b})) np^a] \leq \\
& \Pr[|X_A - \E X_A| \geq 2\delta np^a] \geq \\
& \Pr[|X_A - \E X_A| \geq 3\delta \E[X_A]] = O(n^{-c'}).
\end{align*}

Call a set $A$ having $|X_A - np^a| \geq \delta np^a$ ``bad''. Every
set of size $a$ is a subset of $\binom{n}{s-a}$ sets of size $s$.
Hence the number of $s$-sets containing a bad set is bounded by
\begin{align*}
&\sum_{a=0}^s \binom{n}{a} O(n^{-c'}) \binom{n}{s-a} = %
O(n^{-c'}) \sum_{a=0}^s \binom{n}{a} \binom{n}{s-a} = \\
= & O(n^{-c'}) \sum_{a=0}^s \binom{n}{s} \binom{s}{a} = %
O(n^{-c'}) \binom{n}{s} \sum_{a=0}^s \binom{s}{a} = \\
& = O(n^{-c'}) \binom{n}{s} 2^s = O(n^{-c'+c}) \binom{n}{s}.
\end{align*}
Hence, requiring $c-c' < 0$ or $c < b/(1-\log p - 6\log (1-p))$
gives that the number of $s$-sets having a bad subset is
$o(\binom{n}{s})$.

Pick a set $S$ of size $s$ without any bad subset. Consider $A
\subset S$, $|A| = a$. Let $W(A)$ be the set of vertices $v$ such
that $A \subset N(v)$ and let $W^*(A)$ be the set of vertices $v$
such that $A = N(v)$. By the inclusion exclusion principle we have
\begin{align*}
&|W^*(A)| = |W(A)| - \sum_{v \in S \setminus A} |W(A \cup {v})| +
\sum_{u\neq v \in S \setminus A} |W(A \cup {u,v})| - \dotsb = \\
= & np^a \sum_{k=0}^{s-a} (-1)^k \binom{s-a}{k} p^k (1\pm \delta) \geq \\
\geq & np^a \sum_{k=0}^{s-a} (-1)^k \binom{s-a}{k} p^k  - \delta
np^a \sum_{k=0}^{s-a} \binom{s-a}{k} p^k = \langle \text{ IE
principle\footnotemark\ } \rangle = \\
= & np^a (1-p)^{s-a}  - \delta np^a \sum_{k=0}^{s-a} \binom{s-a}{k} p^k = \\
= & np^a (1-p)^{s-a}  - \delta np^a \sum_{k=0}^{s-a} \frac{\binom{s-a}{k} p^k (1-p)^{s-a-k}}{(1-p)^{s-a-k}} \geq \\
= & np^a ((1-p)^{s-a} - \delta/(1-p)^{s-a}) \geq np^a ((1-p)^s -
\delta/(1-p)^s).
\end{align*}
\footnotetext{Assume you have $s-a$ biased coins that yield head
with probability $p$. Let $B$ be the event ``No head when tossing
all coins'' (we assume independence of course). Define $A_i$ as the
event ``The $i$'th coin gave head''. Now, by the inclusion exclusion
principle the probability of $B$ is $\sum_{k=0}^{s-a} (-1)^k
\binom{s-a}{k} p^k$. Direct computation gives that $\Pr[B] =
(1-p)^{s-a}$.}

Since $s=c\log n$ and $\delta = n^{3c\log(1-p)} = (1-p)^{3s}$ we
have
\begin{align*}
|W^*(A)| \geq & np^a(1-p)^s (1+o(1)) \geq np^s(1-p)^s (1+o(1)) \geq
\\
\geq & n^{1 + c\log p + c\log(1-p)} .
\end{align*}
In particular, if we require $c < -1/(\log p + \log(1-p))$, we get
that $W^*(A)$ is not empty and the proof is complete.
\end{proof}

\noindent {\bf  Acknowledgment} The first author thanks Michael
Krivelevich for many helpful and instructive discussions regarding
these problems in general, and the technical aspects of the proofs
in particular.

\noindent Thanks also to Joel Spencer for introducing the problems
and the beauty of this subject.

\bibliographystyle{abbrv}

\begin{thebibliography}{10}



\bibitem{B:alon_&_spencer08}
N.~Alon and J.H. Spencer.
\newblock {\em The Probabilistic Method}.
\newblock Wiley-Interscience Series in Discrete Mathematics and
              Optimization,
\newblock Third edition, Aug. 2008.

\bibitem{B:barbour_&_holst_&_janson1992}
A.D. Barbour, L. Holst and S. Janson.
\newblock  {\em Poisson Approximation}.
\newblock Vol 2 of {\em Oxford Studies in Probability},
Oxford University Press, Oxford, Mar. 1992.


\bibitem{IP:lebars1998}
J.-M. L. Bars.
\newblock Fragments of existential second-order logic without 0-1 laws.
\newblock In {\em LICS}, pages 525--536, 1998.

\bibitem{A:blass_&_gurevich_&_Kozen1985}
A. Blass, Y. Gurevich and D. Kozen.
\newblock A zero-one law for logic with a fixed-point operator.
\newblock {\em Information and Control}, 67(1-3):70--90, 1985.


\bibitem{A:blass_&_harary1979}
A. Blass and F. Harary.
\newblock Properties of almost all graphs and complexes.
\newblock {\em Journal of Graph Theory}, 3, 225--240, 1979.


\bibitem{A:bollobas1988b}
B. Bollob{\'a}s.
\newblock The chromatic number of random graphs.
  \newblock {\em Combinatorica}, 8(1):49--55, Mar. 1988.



\bibitem{B:bollobas01}
 B. Bollob{\'a}s.
\newblock {\em Random graphs}.
\newblock Number 73 in Cambridge Studies in avanced mathematics.
 Cambridge University Press, Cambridge, UK, Second edition, Jan. 2001.

\bibitem{B:bollobas02}
 B. Bollob{\'a}s.
\newblock {\em Modern Graph Theory}.
\newblock Graduate Texts in Mathematics.
          Springer, New York, Aug. 2002. Corrected Editon.

\bibitem{IC:compton89}
K.J. Compton.
\newblock 0-1 laws in logic and combinatorics.
\newblock In R.I., ed., {\em Algorithms and Order}, volume 255 of
 {\em Advanced Study Institute Series C: Mathematical and Physical Sciences}, 353--383. Kluwer Academic Publishers, Dordrecht, 1989.


\bibitem{B:diestel2010}
R. Diestel.
\newblock {\em Graph Theory}.
\newblock No. 173 in Graduate Texts in Mathematics.  Springer-Verlag, Heidelberg, 4th ed., Oct. 2010.

\bibitem{B:ebbinghaus_&_flum2006}
H.D. Ebbinghaus and J. Flum.
\newblock {\em Finite Model Theory}.
\newblock Springer Monographs in Mathematics. Springer, Berlin, 2nd ed., 2006.


\bibitem{B:ebbinghaus_&_flum_&_thomas1984}
H.-D. Ebbinghaus, J. Flum and W. Thomas.
\newblock {\em Mathematical Logic}.
\newblock Undergraduate Texts in Mathematics. Springer-Verlag, New York, 1984.


\bibitem{A:erdos_&_renyi1959}
P. Erd\H os and A. R\'{e}nyi.
\newblock On random graphs, I.
\newblock {\em Publicationes Mathematicae}, 6:290--297, 1959.


\bibitem{IP:erdos_&_renyi1960}
P. Erd\H os and A. R\'{e}nyi.
\newblock On the evolution of random graphs.
\newblock In {\em Publication of the Mathematical Inst. of the Hungarian Academy of Sciences}, no. 5 in Acta Math. Acad. Sci. Hungar., pages 17--61, 1960.



\bibitem{A:fagin76}
R. Fagin.
\newblock Probabilities on finite models.
\newblock {\em The Journal of Symbolic Logic} 41(1):50--58, Mar. 1976.

\bibitem{A:glebskii_&_kogan_&_liogonkii_&_talanov69}
Y.V. Glebski{\v i}, D.I. Kogan, M.I. Liogon'ki{\v i} and V.A. Talanov.
\newblock Range and degree of realizability of formulas in the restricted predicate calculus.
\newblock {\em Cybernetics and Systems Analysis}, 5(2):142--154, Mar. 1969.
\newblock (Russian original: Kibernetika 5(2):17--27, March-April 1969).


\bibitem{A:haber_&_krivelevich2010}
S. Haber and M. Krivelevich.
\newblock The logic of random regular graphs.
\newblock {\em Journal of Combinatorics}, 1(3-4):389--440, 2010.


\bibitem{A:haber_&_shelahPPb}
S. Haber and S. Shelah.
\newblock Failure of --1 law for sparse random graph, for strong logics,
\newblock{\em Preprint}.


\bibitem{A:haber_&_shelahPPa}
S. Haber and S. Shelah.
\newblock On a very expressive language obeying a zero-one law.
\newblock {\em Preprint}, 2013+.



\bibitem{A:haber_&_mullerpp}
S. Haber and T. M\"uller.
\newblock On the logic of random geometric graphs.
\newblock {\em Preprint}, 2013+.


\bibitem{B:janson_&_luczak_&_rucinski00}
S. Janson, T. {\L}uczak and A. Ruci\'{n}ski.
\newblock {\em Random Graphs}.
\newblock Wiley-Interscience Series in Discrete Mathematics and Optimiziation. Wiley, New York, 2000.


\bibitem{A:Kaufmann_&_Shelah1985}
M. Kaufmann and S. Shelah.
\newblock On random models of finite power and monadic logic.
\newblock {\em Discrete Mathematics} 54(3):285--293, 1985.



\bibitem{IP:kolaitis_&_kopparty2009}
P.G. Kolaitis and S. Kopparty.
\newblock Random graphs and the parity quantifier.
\newblock In M. Mitzenmacher, ed., {\em STOC}, 705--714. ACM, 2009.

\bibitem{IP:kolaitis_&_vardi1987}
P.G. Kolaitis and M.Y. Vardi.
\newblock The decision problem for the probabilities of higher-order properties.
\newblock In {\em Proceedings of the nineteenth annual ACM symposium on Theory of computing}, STOC '87, 425--435, New York, 1987. ACM.


\bibitem{IP:kolaitis_&_vardi2000}
P.G. Kolaitis and M.Y. Vardi.
\newblock 0-1 laws for fragments of existential second-order logic: A survey.
\newblock In M. Nielsen and B. Rovan, editors, {\em MFCS}, vol. 1893 of {\em Lecture Notes in Computer Science}, 84--98. Springer, 2000.


\bibitem{A:lindstrom1966c}
P.~Lindstr{\"{o}}m.
\newblock First order predicate logic with generalized quantifiers.
\newblock {\em Theoria}, 32(3):186--195, Dec. 1966.

\bibitem{A:lindstrom1969}
P.~Lindstr{\"{o}}m.
\newblock On extensions of elementary logic.
\newblock {\em Theoria}, 35(1):1--11, Apr. 1969.



\bibitem{A:mckay_&_wormald1997}
B.D. McKay and N.C. Wormald.
\newblock The degree sequence of a random graph. I. The models.
\newblock {\em Random Structures \& Algorithms}, 11(2):97--117, Sept. 1997.


\bibitem{A:shelah_&_spencer88}
S. Shelah and J. Spencer.
\newblock Zero-one laws for sparse random graphs.
\newblock {\em Journal of the American Mathematical Society}, 1(1):97--115, Jan. 1988.


\bibitem{B:spencer01}
J.H. Spencer.
\newblock {\em The Strange Logic of Random Graphs}.
No. 22 in Algorithms and Combinatorics. Springer-Verlag, Berlin, 2001.



\bibitem{A:talanov1981}
V.A. Talanov.
\newblock Asymptotic solvability of logical formulas.
\newblock {\em Combinatorial-Algebraic Methods in Applied Mathematics}, 118--126, 1981.
\newblock Gorky University, USSR. (Russian). Math Reviews 85i:03081.


\bibitem{B:west2000}
D.B. West.
\newblock {\em Introduction to Graph Theory}.
\newblock Prentice Hall, New Jersey, 2nd edition, Sept. 2000.

\end{thebibliography}

\end{document}